\newtheorem{thm}{Theorem}[section]
\newtheorem{lem}[thm]{Lemma}
\newtheorem{cor}[thm]{Corollary}
\newtheorem{rmk}[thm]{Remark}
\newtheorem{fac}[thm]{Fact}
\newtheorem{thm-con}[thm]{Theorem-Conjecture}
\numberwithin{equation}{section}
\theoremstyle{definition}
\newtheorem{defn}[thm]{Definition}
\newcommand\numberthis{\addtocounter{equation}{1}\tag{\theequation}}
\newcommand{\f}{\Bbb F}
\newcommand{\uu}{\boldsymbol u}
\newcommand{\vv}{\boldsymbol v}
\newcommand{\x}{\boldsymbol x}
\newcommand{\X}{\boldsymbol X}
\newcommand{\Y}{\boldsymbol Y}
\newcommand{\T}{\boldsymbol t}
\begin{document}

\title{Polynomials Meeting Ax's Bound}

\author[Xiang-dong Hou]{Xiang-dong Hou}
\address{Department of Mathematics and Statistics,
University of South Florida, Tampa, FL 33620}
\email{xhou@usf.edu}

\keywords{Ax's theorem, Katz's theorem, Gauss sum, Stickelberger congruence}

\subjclass[2000]{11L05, 11T06, 94B27}

\begin{abstract}
Let $f\in\Bbb F_q[X_1,\dots,X_n]$ with $\deg f=d>0$ and let $Z(f)=\{(x_1,\dots,x_n)\in \Bbb F_q^n: f(x_1,\dots,x_n)=0\}$. Ax's theorem states that $|Z(f)|\equiv 0\pmod {q^{\lceil n/d\rceil-1}}$, that is, $\nu_p(|Z(f)|)\ge m(\lceil n/d\rceil-1)$, where $p=\text{char}\,\Bbb F_q$, $q=p^m$, and $\nu_p$ is the $p$-adic valuation. In this paper, we determine a condition on the coefficients of $f$ that is necessary and sufficient for $f$ to meet Ax's bound, that is,
$\nu_p(|Z(f)|)=m(\lceil n/d\rceil-1)$. Let $R_q(d,n)$ denote the $q$-ary Reed-Muller code $\{f\in\Bbb F_q[X_1,\dots,X_n]: \deg f\le d,\ \deg_{X_j}f\le q-1,\ 1\le j\le n\}$, and let $N_q(d,n;t)$ be the number of codewords of $R_q(d,n)$ with weight divisible by $p^t$.
As applications of the aforementioned result, we find explicit formulas for $N_q(d,n;t)$ in the following cases: (i) $q=2^m$, $n$ even, $d=n/2$, $t=m+1$; (ii) $q=2$, $n/2\le d\le n-2$, $t=2$; (iii) $q=3^m$, $d=n$, $t=1$; (iv) $q=3$, $n\le d\le 2n$, $t=1$.
\end{abstract}
\maketitle

\section{Introduction}

Let $\f_q$ be the finite field with $q=p^m$ elements, where $p=\text{char}\,\f_q$. Let $f\in\f_q[X_1,\dots,X_n]$ with $\deg f=d>0$ and let $Z(f)=\{(x_1,\dots,x_n)\in\f_q^n:f(x_1,\dots,x_n)=0\}$. Ax's theorem \cite{Ax-AJM-1964} states that
\begin{equation}\label{1.1}
\nu_p(|Z(f)|)\ge m\Bigl(\Bigl\lceil\frac nd\Bigr\rceil-1\Bigr),
\end{equation}
where $\nu_p$ denotes the $p$-adic valuation. Ax's theorem is a strengthening of a result by  Warning \cite{Warning-AMSUH-1936}. Further back along this line were a conjecture by  Artin on the existence of nonzero roots of a homogeneous polynomial $f\in\f_q[X_1,\dots,X_n]$ with $n>\deg f$ and  Chevalley's proof of Artin's conjecture; see \cite{Chevalley-AMSUH-1936}.

The main ingredient of the original proof of Ax's theorem is the Stickelberger congruence of Gauss sums. A different proof based on the same idea but without using Gauss sums and the Stickelberger congruence was given by Ward \cite{Ward-DM-1990}.

Ax's theorem has been extended to several polynomials by N. Katz \cite{Katz-AJM-1971}. Assume that $f_i\in\f_q[X_1,\dots,X_n]$, $1\le i\le r$, are such that $\deg f_i=d_i>0$ and $d_1=\max_{1\le i\le r}d_i$, then
\begin{equation}\label{1.2}
\nu_p\bigl(|Z(f_1)\cap\cdots\cap Z(f_r)|\bigr)\ge m\Bigl\lceil\frac{n-d_1-\cdots-d_r}{d_1}\Bigr\rceil.
\end{equation}
The original proof of Katz's theorem relied on sophisticate tools. A simpler proof was given by Wan \cite{Wan-AJM-1989, Wan-PAMS-1995} using a method similar to Ax's. A more elementary proof of Katz's theorem for prime fields was found by Wilson \cite{Wilson-DM-2006}. Sun \cite{Sun-arxiv-0608560} further extended Katz's theorem for prime fields along the line of Wilson's approach.

Delsarte and McEliece \cite{Delsarte-McEliece-AJM-1976} studied functions from a finite abelian group $A$ to $\f_q$, where $\text{gcd}(|A|,q)=1$. Such functions were treated as elements of the group algebra $\f_q[A]$. Instead of polynomials in $\f_q[X_1,\dots,X_n]$ with a given degree, functions $f:A\to\f_q$ that belong to an ideal of $\f_q[A]$ were considered. (In coding theory, an ideal of $\f_q[A]$ is called an {\em abelian code}.) \cite{Delsarte-McEliece-AJM-1976} established a lower bound for $\nu_p(|Z(f)|)$, which implies Ax's theorem when $A$ is the cyclic group of order $q^n-1$. D. Katz \cite{Katz-DCC-2012} generalized the result of \cite{Delsarte-McEliece-AJM-1976} to a lower bound for $\nu_p(|Z(f_1)\cap\cdots\cap Z(f_r)|)$, $f_1,\dots,f_r\in\f_q[A]$, and when $A$ is the cyclic group of order $q^n-1$, the generalized bound gives the theorem of N. Katz.

Although not obvious, \eqref{1.2} {\em actually follows from} \eqref{1.1}, which was a finding by the author \cite{Hou-FFA-2005}.

The bounds in \eqref{1.1} and \eqref{1.2} are both sharp; see \cite{Ax-AJM-1964, Katz-AJM-1971}. Therefore, improvements of these bounds are possible only under additional assumptions. For such improvements, see Cao \cite{Cao-JNT-2012}, Cao and Sun \cite{Cao-Sun-JNT-2007}, and O. Moreno and C. Moreno \cite{Moreno-Moreno-AJM-1995}.

Focusing on \eqref{1.1}, we note that another way to ``improve'' the bound is to find the next term in the $p$-adic expansion of $|Z(f)|$. In this paper, we will find an expression $E(f)\in\f_p$ such that
\begin{equation}\label{1.3}
|Z(f)|\equiv q^{\lceil n/d\rceil-1}E(f)\pmod{p^{m(\lceil n/d\rceil-1)+1}}.
\end{equation}
Therefore,
\[
\nu_p(|Z(f)|)\ge m\Bigl(\Bigl\lceil\frac nd\Bigr\rceil-1\Bigr)+1
\]
if and only if $E(f)=0$. The expression $E(f)$ is a homogeneous polynomial over $\f_p$ in the coefficients of $f$; it is not explicit in general. However, in several special but nontrivial cases, $E(f)$ can be made explicit. By exploiting this fact, we obtain several explicit formulas for the number of codewords in a Reed-Muller code with weight divisible by a power of $p$. More precisely, let $R_q(d,n)$ denote the $q$-ary Reed-Muller code $\{f\in\f_q[X_1,\dots,X_n]: \deg f\le d,\ \deg_{X_j}f\le q-1,\ 1\le j\le n\}$, where $\deg$ is the total degree and $\deg_{X_j}$ is the degree in $X_j$, and let $N_q(d,n;t)$ be the number of codewords of $R_q(d,n)$ with weight divisible by $p^t$, where $p=\text{char}\,\f_q$.
We find explicit formulas for $N_q(d,n;t)$ in the following cases: (i) $q=2^m$, $n$ even, $d=n/2$, $t=m+1$; (ii) $q=2$, $n/2\le d\le n-2$, $t=2$; (iii) $q=3^m$, $d=n$, $t=1$; (iv) $q=3$, $n\le d\le 2n$, $t=1$.

In fact, for a finite abelian group $A$ and $f\in\f_q[A]$, Delsarte and McEliece had found a formula for the next term in the $p$-adic expansion of $|Z(f)|$; see \cite[(4.29)]{Delsarte-McEliece-AJM-1976}. From that formula with $A=\Bbb Z/(q^n-1)\Bbb Z$, one can derive a expression for the ``next term'' in Ax's theorem. The formula for the ``next term'' in \cite{Delsarte-McEliece-AJM-1976}, including the case $A=\Bbb Z/(q^n-1)\Bbb Z$, involves the Fourier transform of $f$ which takes values in an extension of $\f_q$. In comparison, the expression $E(f)$ determined in \eqref{2.14.5} of the present paper is considerably simpler. 

In Section~2, we determine the expression $E(f)$ in \eqref{1.3}. The method is a refinement of the original proof of Ax's theorem and relies on a careful analysis of the Stickelberger congruence of Gauss sums. Applications to Reed-Muller codes are discussed in Section~3.

Throughout the paper, for $\boldsymbol u, \boldsymbol v\in\Bbb Z^n$, the relations $\boldsymbol u\equiv \boldsymbol v\pmod k$ and $\boldsymbol u\le \boldsymbol v$ are meant to be component wise. We define
\begin{equation}\label{Delta}
\Delta_n=\left[
\begin{matrix}
0&&1\cr
&\iddots\cr
1&&0
\end{matrix}\right]_{n\times n}.
\end{equation}


\section{$p$-adic Expansion of $|Z(f)|$}

\subsection{Gauss sum and Stickelberger congruence}\

Facts gathered in this subsection can be found in any textbook on algebraic number theory, e.g., Lang~\cite[Ch.\,IV, \S3]{Lang}.

For an integer $k>0$, let $\zeta_k=e^{2\pi i/k}$. The ring of integers of a number field $F$ is denoted by $\frak o_F$. Let $p$ be a rational prime, $m>0$ and $q=p^m$. Let $\frak p$ be a prime of $\frak o_{\Bbb Q(\zeta_{q-1})}$ lying above $p$. $\frak p$ is unramified over $p$ and $\frak o_{\Bbb Q(\zeta_{q-1})}/\frak p=\f_q$. The Teichm\"uller set $T=\{0\}\cup\langle\zeta_{q-1}\rangle=\{0,\zeta_{q-1}^0,\dots,\zeta_{q-1}^{q-2}\}$ forms a system of coset representative of $\frak p$ in $\frak o_{\Bbb Q(\zeta_{q-1})}$, that is, $\f_q=\frak o_{\Bbb Q(\zeta_{q-1})}/\frak p=\{t+\frak p:t\in T\}$. The Teichm\"uller character $\chi_\frak p$ is a multiplicative character of $\f_q$ of order $q-1$ defined by
\[
\begin{array}{crccl}
\chi_\frak p:&\f_q=\frak o_{\Bbb Q(\zeta_{q-1})}/\frak p&\longrightarrow&T \vspace{1mm}\cr
&t+\frak p&\longmapsto&t,&\in T.
\end{array}
\]
For each $a\in \Bbb Z$, the Gauss sum of $\chi_{\frak p}^a$ is
\[
g(\chi_{\frak p}^a)=\sum_{t\in\langle\zeta_{q-1}\rangle}\chi_{\frak p}^a(t)\zeta_p^{\text{Tr}_{q/p}(t+\frak p)}\in \frak o_{\Bbb Q(\zeta_{p(q-1)})}.
\]
Let $\wp$ be the unique prime of $\frak o_{\Bbb Q(\zeta_{p(q-1)})}$ lying above $\frak p$. $\wp$ is totally ramified over $\frak p$ with ramification index $e(\wp\mid \frak p)=p-1$.

For an integer $a\ge 0$ with base $p$ expansion $a=a_0+a_1p+\cdots$, $0\le a_i\le p-1$, define $s(a)=a_0+a_1+\cdots$ and $\gamma(a)=a_0!a_1!\cdots$. The Stickelberger congruence states that for $1\le a\le q-2$,
\begin{equation}\label{2.1}
\frac{g(\chi_{\frak p}^{-a})}{(\zeta_p-1)^{s(a)}}\equiv\frac{-1}{\gamma(a)}\pmod\wp.
\end{equation}

\subsection{$p$-adic expansion of $|Z(f)|$}\

For $\uu=(u_1,\dots,u_m)\in\Bbb N^n$, let $|\uu|=u_1+\cdots+u_n$. If $\x=(x_1,\dots,x_n)$ is an $n$-tuple of elements from a commutative ring, we define $\x^{\uu}=x_1^{u_1}\cdots x_n^{u_n}$. Let $U_d=\{\uu\in\Bbb N^n:|\uu|\le d\}$ and consider
\[
f=\sum_{\uu\in U_d}a_{\uu}\X^{\uu}\in\f_q[X_1,\dots,X_n],
\]
where $\X=(X_1,\dots,X_n)$. We write $\sum_{\boldsymbol u}$ and $\prod_{\boldsymbol u}$ for $\sum_{\boldsymbol u\in U_d}$ and $\prod_{\boldsymbol u\in U_d}$, respectively. By \cite[($5'$)]{Ax-AJM-1964}, we have
\begin{equation}\label{2.2}
q|Z(f)|=\sum_{i:U_d\to\{0,\cdots,q-1\}}\Bigl(\prod_{\uu}\alpha_{\uu}^{i(\uu)}\Bigr)\Bigl(\prod_{\uu}c_{i(\uu)}\Bigr)\sum_{\T\in T^{n+1}}\T^{\sum_{\uu}i(\uu)(1,\uu)},
\end{equation}
where $\alpha_{\uu}\in T$ is such that 
\begin{equation}\label{2.2.1}
a_{\uu}=\alpha_{\uu}+\frak p,
\end{equation}
and 
\begin{equation}\label{2.3}
c_i=
\begin{cases}
1&\text{if}\ i=0,\vspace{2mm} \cr
\displaystyle -\frac q{q-1}&\text{if}\ i=q-1,\vspace{2mm} \cr
\displaystyle \frac 1{q-1}g(\chi_\frak p^{-i})&\text{if}\ 0<i<q-1.
\end{cases}
\end{equation}
By \eqref{2.1}, we have $\nu_\wp(c_i)=s(i)$ for all $0\le i\le q-1$. From the proof in \cite[\S3]{Ax-AJM-1964}, we know that
\begin{equation}\label{2.4}
\nu_\wp\biggl(\Bigl(\prod_{\uu}c_{i(\uu)}\Bigr)\sum_{\T\in T^{n+1}}\T^{\,\sum_{\uu}i(\uu)(1,\uu)}\biggr)\ge m(p-1)\Bigl\lceil\frac nd\Bigr\rceil
\end{equation}
for all $i:U_d\to \{0,\dots,q-1\}$, where $\nu_\wp$ is the $\wp$-adic valuation. In fact, \eqref{2.4} implies \eqref{1.1} immediately. In what follows, we will reprove \eqref{2.4}, and we will focus on those $i$ for which the equal sign holds in \eqref{2.4}. 

When $\sum_{\uu}i(\uu)(1,\uu)\not\equiv(0,\dots,0)\pmod{q-1}$, 
\[
\sum_{\T\in T^{n+1}}\T^{\,\sum_{\uu}i(\uu)(1,\uu)}=0.
\]
When $\sum_{\uu}i(\uu)(1,\uu)=(0,\dots,0)$,
\[
\text{LSH of \eqref{2.4}}\ge\nu_\wp(q^{n+1})=m(p-1)(n+1)>m(p-1)\Bigl\lceil\frac nd\Bigr\rceil.
\]
Therefore, we assume that $\sum_{\uu}i(\uu)(1,\uu)\equiv(0,\dots,0)\pmod{q-1}$ but $i\ne0$ ($i(\uu)\ne 0$ for at least one $\uu\in U_d$). Let $k$ be the number of nonzero components of $\sum_{\uu}i(\uu)\uu$. Then 
\begin{equation}\label{2.4.1}
\sum_{\T\in T^{n+1}}\T^{\,\sum_{\uu}i(\uu)(1,\uu)}=(q-1)^{k+1}q^{n-k},
\end{equation}
and
\begin{align*}
\text{LSH of \eqref{2.4}}\,&=\nu_\wp\biggl(\Bigl(\prod_{\uu}c_{i(\uu)}\Bigr)(q-1)^{k+1}q^{n-k}\biggr)\\
&=\sum_{\uu}s(i(\uu))+m(p-1)(n-k)\\
&\ge m(p-1)\Bigl\lceil\frac kd\Bigr\rceil+m(p-1)(n-k)\numberthis \label{2.5}\\
&= m(p-1)\Bigl(\Bigl\lceil\frac kd\Bigr\rceil+n-k\Bigr)\\
&\ge m(p-1)\Bigl\lceil\frac nd\Bigr\rceil.\numberthis \label{2.6}
\end{align*}
In the above, inequality \eqref{2.6} is straightforward; inequality \eqref{2.5} was proved in \cite{Ax-AJM-1964} and will be explained below. First, we have

\begin{fac}\label{F2.1}
When $d\ge 2$, the equal sign in \eqref{2.6} holds if and only if (i) $k=n$, or (ii) $k=n-1$ and $d\mid n-1$.
\end{fac}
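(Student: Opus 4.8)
The plan is to divide \eqref{2.6} by the positive factor $m(p-1)$, so that it becomes the assertion $g(k)\ge\lceil n/d\rceil$, where $g(k):=\lceil k/d\rceil+n-k$, and then to determine for which $k\in\{0,1,\dots,n\}$ equality holds; since $g(n)=\lceil n/d\rceil$, the equal sign in \eqref{2.6} means $g(k)=g(n)$. The starting point is the elementary identity, valid for $d\ge2$ and every integer $k\ge0$,
\[
\lceil (k+1)/d\rceil-\lceil k/d\rceil=
\begin{cases} 1 & \text{if } d\mid k,\\[1mm] 0 & \text{if } d\nmid k\end{cases}
\]
(this is precisely where the hypothesis $d\ge2$ is used; for $d=1$ the left side is always $1$). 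Consequently
\[
g(k+1)-g(k)=\lceil (k+1)/d\rceil-\lceil k/d\rceil-1=
\begin{cases} 0 & \text{if } d\mid k,\\[1mm] -1 & \text{if } d\nmid k,\end{cases}
\]
so $g$ is non-increasing on $\{0,\dots,n\}$ and attains its minimum value $g(n)=\lceil n/d\rceil$ at $k=n$.

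Next I would telescope: for $0\le k\le n$,
\[
g(k)-g(n)\;=\;\sum_{j=k}^{n-1}\bigl(g(j)-g(j+1)\bigr)\;=\;\#\{\,j\in\Bbb Z: k\le j\le n-1,\ d\nmid j\,\}.
\]
Hence $g(k)=g(n)$, i.e. the equal sign in \eqref{2.6} holds, if and only if every integer in the interval $[k,n-1]$ is divisible by $d$.

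It then remains to translate this divisibility condition according to how many integers lie in $[k,n-1]$. If $k=n$, the interval contains no integer, the condition is vacuous, and we get case (i). If $k=n-1$, the only integer involved is $n-1$, so the condition is exactly $d\mid n-1$, giving case (ii). If $k\le n-2$, the interval $[k,n-1]$ contains the two consecutive integers $n-2$ and $n-1$; since $d\ge2$ cannot divide two consecutive integers, at least one of them is not divisible by $d$, so the count above is positive, $g(k)>g(n)$, and \eqref{2.6} is strict. This yields precisely the characterization (i)--(ii).

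I do not expect a genuine obstacle here: the whole argument is a one-line monotonicity computation followed by telescoping. The only things worth keeping an eye on are that the ceiling identity (and hence the monotonicity of $g$) really requires $d\ge2$, and that for $k\le n-2$ it suffices to inspect only the last two integers $n-2,n-1$ of $[k,n-1]$, rather than the whole interval, in order to force strict inequality.
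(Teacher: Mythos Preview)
Your argument is correct. The paper does not actually prove Fact~\ref{F2.1}: it merely declares inequality \eqref{2.6} ``straightforward'' and states the equality characterization as a fact left to the reader, so your monotonicity-plus-telescoping computation supplies exactly the details the paper omits. One small expository point: the ceiling identity you quote is in fact valid for $d=1$ as well (then $d\mid k$ always and the difference is always $1$), so the hypothesis $d\ge 2$ is not really used there but rather in your final step, where you rule out $k\le n-2$ by observing that $d\ge 2$ cannot divide both $n-2$ and $n-1$.
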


Next, we determine the necessary and sufficient conditions for the equal sign to hold in \eqref{2.5}. We have
\[
d\sum_{\uu}i(\uu)\ge\sum_{\uu}i(\uu)|\uu|\ge k(q-1).
\]
Since $\sum_{\uu}i(\uu)\equiv 0\pmod{q-1}$, we have
\begin{equation}\label{2.7}
\sum_{\uu}i(\uu)\ge(q-1)\Bigl\lceil\frac kd\Bigr\rceil.
\end{equation}
For $a\in\{0,1,\dots,q-1\}$ with base $p$ expansion $a=a_0+a_1p+\cdots+a_{m-1}p^{m-1}$, $0\le a_j\le p-1$, define
\[
\tau(a)=a_{m-1}+a_0p+\cdots+a_{m-2}p^{m-1}.
\]
Then \eqref{2.7} remains true with $i(\uu)$ replaced by $\tau(i(\uu))$. Therefore,
\begin{equation}\label{2.8}
m(q-1)\Bigl\lceil\frac kd\Bigr\rceil\le\sum_{h=0}^{m-1}\sum_{\uu}\tau^h(i(\uu))=\frac{q-1}{p-1}\sum_{\uu}s(i(\uu)),
\end{equation}
i.e.,
\[
\sum_{\uu}s(i(\uu))\ge m(p-1)\Bigl\lceil\frac kd\Bigr\rceil,\tag{$2.5'$}
\]
which is the same as \eqref{2.5}.

\begin{fac}\label{F2.2}
The equal sign in {\rm ($2.5'$)} holds if and only if 
\begin{equation}\label{2.9}
\sum_{\uu}i(\uu)^{(j)}=(p-1)\Bigl\lceil\frac kd\Bigr\rceil\quad \text{for all}\ 0\le j\le m-1,
\end{equation}
where $(i(\uu)^{(0)},\dots,i(\uu)^{(m-1)})$ are the base $p$ digits of $i(\uu)$.
\end{fac}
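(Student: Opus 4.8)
The plan is to follow the chain of inequalities leading to \eqref{2.8} and determine exactly when it collapses to a chain of equalities. Throughout set $L=\lceil k/d\rceil$ and, for $0\le \ell\le m-1$, put $S_\ell=\sum_{\uu}i(\uu)^{(\ell)}$, so that $\sum_{\uu}s(i(\uu))=\sum_{\ell=0}^{m-1}S_\ell$. With this notation the ``if'' direction is immediate: if $S_\ell=(p-1)L$ for every $\ell$, then $\sum_{\uu}s(i(\uu))=m(p-1)L$, which is equality in ($2.5'$).

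For the ``only if'' direction I would first observe that, for each $0\le h\le m-1$, inequality \eqref{2.7} applies to $\tau^h\circ i\colon U_d\to\{0,\dots,q-1\}$ in place of $i$: it still satisfies $\sum_{\uu}(\tau^h\circ i)(\uu)(1,\uu)\equiv(0,\dots,0)\pmod{q-1}$ because $\tau^h(a)\equiv p^h a\pmod{q-1}$, and the number of nonzero components of $\sum_{\uu}(\tau^h\circ i)(\uu)\uu$ is still $k$, since the $j$-th component vanishes exactly when $i(\uu)=0$ for every $\uu$ with $u_j>0$, a condition unaffected by $\tau^h$. Hence $T_h:=\sum_{\uu}\tau^h(i(\uu))\ge(q-1)L$ for all $h$. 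Because $\tau$ cyclically permutes the base-$p$ digits, $\sum_{h=0}^{m-1}T_h=\frac{q-1}{p-1}\sum_{\uu}s(i(\uu))$, which is precisely the equality in \eqref{2.8}; consequently equality in ($2.5'$) is equivalent to $\sum_{h=0}^{m-1}T_h=m(q-1)L$, and since each summand is at least $(q-1)L$, this is equivalent to $T_h=(q-1)L$ for every $h$.

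It then remains to recover the $S_\ell$ from the equalities $T_h=(q-1)L$. The key point is the identity $\tau(a)=pa-(q-1)e$, valid for $a\in\{0,\dots,q-1\}$ whose leading base-$p$ digit is $e$. Applying this to $\tau^h(i(\uu))$ and summing over $\uu$, and using that the leading digit of $\tau^h(i(\uu))$ is the $((m-1-h)\bmod m)$-th digit of $i(\uu)$, one obtains $T_{h+1}=p\,T_h-(q-1)S_{(m-1-h)\bmod m}$ for $0\le h\le m-1$, with $T_m$ read as $T_0$. Substituting $T_h=T_{h+1}=(q-1)L$ yields $S_{(m-1-h)\bmod m}=(p-1)L$, and as $h$ ranges over $\{0,\dots,m-1\}$ this gives $S_\ell=(p-1)L$ for all $\ell$, i.e.\ \eqref{2.9}. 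The one step requiring care is the claim that \eqref{2.7} transfers to each $\tau^h\circ i$ with the same $k$ and hence the same bound $(q-1)L$; granting that, the remainder is routine bookkeeping with base-$p$ digits.
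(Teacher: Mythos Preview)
Your proof is correct and follows essentially the same route as the paper's. The paper first notes that equality in ($2.5'$) is equivalent to $T_h=(q-1)L$ for all $h$ (your intermediate step), and then proves this is equivalent to \eqref{2.9} via the same digit recurrence $\tau^h(a)=p\,\tau^{h-1}(a)-(q-1)a^{(m-h)}$ that you use. Your ``if'' direction is slightly more direct (summing the $S_\ell$ immediately rather than computing each $T_h$), and you give a fuller justification of why \eqref{2.7} remains valid for $\tau^h\circ i$ with the same $k$, which the paper simply asserts; otherwise the arguments coincide.
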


\begin{proof}
First note that the equal sign in ($2.5'$) holds if and only if 
\begin{equation}\label{2.10}
\sum_{\uu}\tau^h(i(\uu))=(q-1)\Bigl\lceil\frac kd\Bigr\rceil\quad\text{for all}\ 0\le h\le m-1.
\end{equation}
We prove that \eqref{2.9} is equivalent to \eqref{2.10}.

($\Rightarrow$) Assume that \eqref{2.9} holds. Then for each $0\le h\le m-1$ we have
\[
\begin{split}
\sum_{\uu}\tau^h(i(\uu))\,&=\sum_{\uu}\tau^h\Bigl(\sum_{j=0}^{m-1}i(\uu)^{(j)}p^j\Bigr)=\sum_{\uu}\sum_{j=0}^{m-1}i(\uu)^{(j)}\tau^h(p^j)\cr
&=\sum_{j=0}^{m-1}\Bigl(\sum_{\uu}i(\uu)^{(j)}\Bigr)\tau^h(p^j)=(p-1)\Bigl\lceil\frac kd\Bigr\rceil\sum_{j=0}^{m-1}\tau^h(p^j)\cr
&=(p-1)\Bigl\lceil\frac kd\Bigr\rceil(1+p+\cdots+p^{m-1})=(q-1)\Bigl\lceil\frac kd\Bigr\rceil.
\end{split}
\]

($\Leftarrow$) Assume that \eqref{2.10} holds. Since
\[
\begin{split}
\tau^h(i(\uu))\,&=\tau\bigl(\tau^{h-1}(i(\uu))\bigr)=p\tau^{h-1}(i(\uu))-\bigl(\tau^{h-1}(i(\uu))\bigr)^{(m-1)}(q-1)\cr
&=p\tau^{h-1}(i(\uu))-i(\uu)^{(m-h)}(q-1),
\end{split}
\]
where $m-h$ is taken modulo $m$, we have
\[
\begin{split}
(q-1)\Bigl\lceil\frac kd\Bigr\rceil\,&=\sum_{\uu}\tau^h(i(\uu))=\sum_{\uu}\Bigl(p\tau^{h-1}(i(\uu))-i(\uu)^{(m-h)}(q-1)\Bigr)\cr
&=p(q-1)\Bigl\lceil\frac kd\Bigr\rceil-(q-1)\sum_{\uu}i(\uu)^{(m-h)},
\end{split}
\]
i.e.,
\[
\sum_{\uu}i(\uu)^{(m-h)}=(p-1)\Bigl\lceil\frac kd\Bigr\rceil.
\]
\end{proof}

We assume that $d\ge 2$ (to avoid trivial situations). 

\begin{defn}\label{D2.1}
Let $\mathcal I$ be the set of functions $i:U_d\to\{0,\dots,q-1\}$ such that
\begin{itemize}
  \item [(i)] each component of $\sum_{\uu}i(\uu)\uu$ is a positive multiple of $q-1$;
  \item [(ii)] $\sum_{\uu}i(\uu)^{(j)}=(p-1)\lceil n/d\rceil$ for all $0\le j\le m-1$.
\end{itemize} 
If $d\mid n-1$, let $\mathcal I'$ be the set of functions $i:U_d\to\{0,\dots,q-1\}$ such that
\begin{itemize}
  \item [(i)] one of the component of $\sum_{\uu}i(\uu)\uu$ is $0$ and the other components are all positive multiples of $q-1$;
  \item [(ii)] $\sum_{\uu}i(\uu)^{(j)}=(p-1)(n-1)/d$ for all $0\le j\le m-1$.
\end{itemize}
If $d\nmid n-1$, define $\mathcal I'=\emptyset$.
\end{defn}

By Facts~\ref{F2.1} and \ref{F2.2}, the equal sign in \eqref{2.4} holds if and only if $i\in\mathcal I\cup\mathcal I'$. Therefore by \eqref{2.2} and \eqref{2.4.1},
\begin{equation}\label{2.11}
\begin{split}
q|Z(f)|\equiv\,&\sum_{i\in\mathcal I\cup\mathcal I'}\Bigl(\prod_{\uu}\alpha_{\uu}^{i(\uu)}\Bigr)\Bigl(\prod_{\uu}c_{i(\uu)}\Bigr)\sum_{\T\in T^{n+1}}\T^{\,\sum_{\uu}i(\uu)(1,\uu)}\pmod{q^{\lceil n/d\rceil}\wp}\cr
=\,&\sum_{i\in\mathcal I}\Bigl(\prod_{\uu}\alpha_{\uu}^{i(\uu)}\Bigr)\Bigl(\prod_{\uu}c_{i(\uu)}\Bigr)(q-1)^{n+1}+
\sum_{i\in\mathcal I}\Bigl(\prod_{\uu}\alpha_{\uu}^{i(\uu)}\Bigr)\Bigl(\prod_{\uu}c_{i(\uu)}\Bigr)(q-1)^nq.
\end{split}
\end{equation}
We know that
\begin{equation}\label{2.12}
c_{i(\uu)}\equiv\frac{(\zeta_p-1)^{s(i(\uu))}}{\gamma(i(\uu))}\pmod{(\zeta_p-1)^{s(i(\uu))}\wp}.
\end{equation}
(\eqref{2.12} is obvious when $i(\uu)=0$, and follows from \eqref{2.3} and \eqref{2.1} when $1<i(\uu)<q-1$. When $i(\uu)=q-1$, \eqref{2.12} is easily verified directly.) Also note that 
\begin{equation}\label{2.13}
\begin{split}
p=\prod_{j=1}^{p-1}(\zeta_p^j-1)\,&=(\zeta_p-1)^{p-1}\prod_{j=1}^{p-1}\frac{\zeta_p^j-1}{\zeta_p-1}\cr
&\equiv(\zeta_p-1)^{p-1}(p-1)!\pmod{\zeta_p-1)^p}\cr
&\equiv-(\zeta_p-1)^{p-1}\pmod{\zeta_p-1)^p}.
\end{split}
\end{equation}
Now combining \eqref{2.11} -- \eqref{2.13} gives
\begin{equation}\label{2.14}
\begin{split}
q|Z(f)|\equiv\,&\sum_{i\in\mathcal I}\Bigl(\prod_{\uu}\alpha_{\uu}^{i(\uu)}\Bigr)\frac{(\zeta_p-1)^{m(p-1)\lceil n/d\rceil}}{\prod_{\uu}\gamma(i(\uu))}(q-1)^{n+1}\cr
&+\sum_{i\in\mathcal I'}\Bigl(\prod_{\uu}\alpha_{\uu}^{i(\uu)}\Bigr)\frac{(\zeta_p-1)^{m(p-1)(\lceil n/d\rceil-1)}}{\prod_{\uu}\gamma(i(\uu))}(q-1)^{n}q\pmod{q^{\lceil n/d\rceil}\wp}\cr
\equiv\,&q^{\lceil n/d\rceil}(-1)^{n+m\lceil n/d\rceil}\Bigl[-\sum_{i\in\mathcal I}\prod_{\uu}\frac{\alpha_{\uu}^{i(\uu)}}{\gamma(i(\uu))}+(-1)^m\sum_{i\in\mathcal I'}\prod_{\uu}\frac{\alpha_{\uu}^{i(\uu)}}{\gamma(i(\uu))}\Bigr]\cr
&\kern 7cm \pmod{q^{\lceil n/d\rceil}\wp}.
\end{split}
\end{equation}
Let 
\begin{equation}\label{2.14.1}
\mathcal E(f)=(-1)^{n+m\lceil n/d\rceil}\Bigl[-\sum_{i\in\mathcal I}\prod_{\uu}\frac{\alpha_{\uu}^{i(\uu)}}{\gamma(i(\uu))}+(-1)^m\sum_{i\in\mathcal I'}\prod_{\uu}\frac{\alpha_{\uu}^{i(\uu)}}{\gamma(i(\uu))}\Bigr],
\end{equation}
and write \eqref{2.14} as
\begin{equation}\label{2.16.1}
|Z(f)|\equiv q^{\lceil n/d\rceil-1}\mathcal E(f)\pmod{q^{\lceil n/d\rceil-1}\wp}.
\end{equation}
Since $\mathcal E(f)\in\Bbb Q(\zeta_{q-1})$, \eqref{2.16.1} gives
\begin{equation}\label{2.14.2}
|Z(f)|\equiv q^{\lceil n/d\rceil-1}\mathcal E(f)\pmod{q^{\lceil n/d\rceil-1}p}.
\end{equation}
Since $|Z(f)|\in\Bbb Z$, there exists $N\in\Bbb Z$ such that
\begin{equation}\label{2.14.3}
\mathcal E(f)\equiv N\pmod p.
\end{equation}
Taking images of both sides of \eqref{2.14.3} in $\{x\in\Bbb Q(\zeta_{q-1}):\nu_\frak p(x)\ge 0\}/\frak p=\f_q$, we have 
\begin{equation}\label{2.14.4}
E(f)=N\quad \text{(in $\f_q$)},
\end{equation}
where
\begin{equation}\label{2.14.5}
E(f)=(-1)^{n+m\lceil n/d\rceil}\Bigl[-\sum_{i\in\mathcal I}\prod_{\uu}\frac{a_{\uu}^{i(\uu)}}{\gamma(i(\uu))}+(-1)^m\sum_{i\in\mathcal I'}\prod_{\uu}\frac{a_{\uu}^{i(\uu)}}{\gamma(i(\uu))}\Bigr].
\end{equation}
In fact, $E(f)\in\f_p$ because of \eqref{2.14.4}.

To summarize, we have the following theorem.

\begin{thm}\label{T2.2}
Let $n\ge 1$, $d\ge 2$, and 
\[
f=\sum_{\uu\in U_d}a_{\uu}\X^{\uu}\in\f_q[X_1,\dots,X_n],
\]
where $\X=(X_1,\dots,X_n)$. We have
\begin{equation}\label{2.15}
|Z(f)|\equiv q^{\lceil n/d\rceil-1}E(f)\pmod {q^{\lceil n/d\rceil-1}p},
\end{equation}
where $E(f)$ is given in \eqref{2.14.5}. In particular, $\nu_p(|Z(f)|)\ge m(\lceil n/d\rceil-1)+1$ if and only if $E(f)=0$.
\end{thm}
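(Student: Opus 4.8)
The plan is to assemble the ingredients developed in this subsection into the stated congruence. The starting point is Ax's identity \eqref{2.2}, which writes $q|Z(f)|$ as a sum over functions $i\colon U_d\to\{0,\dots,q-1\}$ of $\bigl(\prod_{\uu}\alpha_{\uu}^{i(\uu)}\bigr)\bigl(\prod_{\uu}c_{i(\uu)}\bigr)$ times the character sum $\sum_{\T\in T^{n+1}}\T^{\,\sum_{\uu}i(\uu)(1,\uu)}$. First I would recall from the Stickelberger congruence \eqref{2.1} that $\nu_\wp(c_i)=s(i)$, and then run the valuation estimate already sketched: the character sum vanishes unless $\sum_{\uu}i(\uu)(1,\uu)\equiv0\pmod{q-1}$; the case $\sum_{\uu}i(\uu)(1,\uu)=0$ contributes $\wp$-valuation $>m(p-1)\lceil n/d\rceil$; and in the remaining case \eqref{2.4.1} together with the chain \eqref{2.5}--\eqref{2.6} yields the bound \eqref{2.4}. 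This reproves \eqref{1.1} and, more to the point, isolates the terms carrying the ``next digit''.

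The second step is to pin down exactly which $i$ turn \eqref{2.4} into an equality. By Fact~\ref{F2.1}, equality in \eqref{2.6} forces either $k=n$ or $k=n-1$ with $d\mid n-1$; by Fact~\ref{F2.2}, equality in \eqref{2.5} is the digit-sum condition \eqref{2.9}. Intersecting these two requirements is precisely the definition of $\mathcal I$ and $\mathcal I'$ in Definition~\ref{D2.1}. Reducing \eqref{2.2} modulo $q^{\lceil n/d\rceil}\wp$ therefore annihilates every term with $i\notin\mathcal I\cup\mathcal I'$, and evaluating the surviving character sums by \eqref{2.4.1} gives \eqref{2.11}.

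The third step is the Stickelberger refinement: replace each surviving $c_{i(\uu)}$ by \eqref{2.12} and substitute $p\equiv-(\zeta_p-1)^{p-1}\pmod{(\zeta_p-1)^p}$ from \eqref{2.13}. Tracking the total power of $\zeta_p-1$ --- it is $m(p-1)\lceil n/d\rceil$ on the $\mathcal I$-part and $m(p-1)(\lceil n/d\rceil-1)$ on the $\mathcal I'$-part, which is where the compensating factor of $q$ enters --- collapses \eqref{2.11} into \eqref{2.14}, i.e. $q|Z(f)|\equiv q^{\lceil n/d\rceil}\mathcal E(f)\pmod{q^{\lceil n/d\rceil}\wp}$ with $\mathcal E(f)$ as in \eqref{2.14.1}. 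Dividing by $q$ yields \eqref{2.16.1}. Since $\mathcal E(f)\in\Bbb Q(\zeta_{q-1})$ and $\nu_\wp$ restricts to $(p-1)\nu_{\frak p}$ on $\Bbb Q(\zeta_{q-1})$, the congruence descends to \eqref{2.14.2} modulo $q^{\lceil n/d\rceil-1}p$; and because $|Z(f)|\in\Bbb Z$ there is an integer $N$ with $\mathcal E(f)\equiv N\pmod p$. Passing to the residue field $\f_q$ --- where $\alpha_{\uu}$ maps to $a_{\uu}$ and each $\gamma(i(\uu))$, a product of factorials of integers $<p$, is a unit --- identifies $N$ with $E(f)$ of \eqref{2.14.5}, so in particular $E(f)\in\f_p$, and \eqref{2.15} follows. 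The final assertion is then immediate: by \eqref{1.1}, $|Z(f)|=q^{\lceil n/d\rceil-1}M$ for some $M\in\Bbb Z$, and \eqref{2.15} says $M\equiv N\pmod p$; hence $\nu_p(|Z(f)|)\ge m(\lceil n/d\rceil-1)+1$ if and only if $p\mid M$, that is, $E(f)=0$.

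I expect the main obstacle to be the bookkeeping in this last reduction: one must verify that the congruence modulo the prime $\wp$ of $\frak o_{\Bbb Q(\zeta_{p(q-1)})}$ genuinely descends to a congruence modulo $p$ in $\frak o_{\Bbb Q(\zeta_{q-1})}$ and then to an identity in $\f_p$, while keeping exact account of the ramification index $p-1$, the parity factors $(-1)^{n+m\lceil n/d\rceil}$ and $(-1)^m$, and the fact that the $\mathcal I$- and $\mathcal I'$-contributions sit at different powers of $q$ yet merge modulo $q^{\lceil n/d\rceil}\wp$ into the single expression $E(f)$.
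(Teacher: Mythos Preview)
Your proposal is correct and follows the paper's argument essentially step for step: the paper's proof \emph{is} the development in \S2.2 culminating in \eqref{2.14}--\eqref{2.14.5}, and you have recapitulated exactly that sequence---Ax's identity \eqref{2.2}, the valuation estimate \eqref{2.4} via \eqref{2.5}--\eqref{2.6}, identification of the equality cases through Facts~\ref{F2.1} and \ref{F2.2} as $\mathcal I\cup\mathcal I'$, the Stickelberger reduction \eqref{2.12}--\eqref{2.13} leading to \eqref{2.14}, and the descent from $\wp$ to $\frak p$ to $p$ in \eqref{2.16.1}--\eqref{2.14.4}. Your closing remark making the final equivalence explicit via $|Z(f)|=q^{\lceil n/d\rceil-1}M$ is a helpful clarification the paper leaves implicit.
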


\begin{rmk}\label{R2.3}\rm
$E(f)$ is a homogeneous polynomial of degree $(q-1)\lceil n/d\rceil$ over $\f_p$ in the coefficients of $f$. In general, this expression is not explicit because $\mathcal I$ and $\mathcal I'$ are not. In the next section, we explore several special cases where $E(f)$ can be made explicit.
\end{rmk}


\section{Applications to Reed-Muller Codes}

\subsection{Reed-Muller codes}\

For a prime power $q=p^m$ and integers $n, d$ with $n>0$ and $0\le d\le n(q-1)$, the $q$-ary Reed-Muller code $R_q(d,n)$ is defined as
\begin{equation}\label{3.7}
R_q(d,n)=\bigl\{f\in\f_q[X_1,\dots,X_n]: \deg f\le d,\ \deg_{X_j}f\le q-1,\ 1\le j\le n\bigr\}.
\end{equation}
(For convenience, we define $R_q(-1,n)=\{0\}$.) It is known that \cite[Result~1]{Ding-Key}
\begin{equation}\label{dim}
\dim_{\f_q}R_q(d,n)=\sum_{j\le\lfloor d/q\rfloor}(-1)^j\binom nj\binom{d-qj+n}n.
\end{equation}
For each $f\in R_q(d,n)$, its (Hemming) weight is $|f|=q^n-|Z(f)|$.  
The weight enumerator of $R_q(d,n)$ is not known except for the following special cases.
\begin{itemize}
  \item [(i)] $d\le 2$ or $d\ge n(q-1)-3$. (For $d=2$ and $q=2$, see \cite[Ch.\,15, \S2]{MacWilliams-Sloane}; for $d=2$ and $q$ general, use the well known classification of quadratic forms over $\f_q$. For $d\ge n(q-1)-3$, note that the dual of $R_q(d,n)$ is $R_q(d',n)$, where $d'=n(q-1)-1-d\le 2$.)
  \item [(ii)] $q=2$ and $n\le 8$ (\cite{Kasami-Tokura-Azumi-1976, Sugino-Ienage-Tokura-Kasami-1971}).
  \item [(iii)] $q=2$, $n=9$, $d=3$ (\cite{Sugita-Kasami-Fujiwara-IEEE-IT-1996}).
\end{itemize} 

\smallskip
For $t\ge 0$, let 
\[
N_q(d,n;t)=\bigl|\{f\in R_q(d,n):\nu_p(|f|)\ge t\}\bigr|.
\]
Ax's theorem implies that $N_q(d,n;t)=|R_q(d,n)|$ for $t\le m(\lceil n/d\rceil-1)$. We will use Theorem~\ref{T2.2} to determine $N_q(d,n;t)$ with $t=m(\lceil n/d\rceil-1)+1$ in several cases; such formulas provide new information concerning the weight enumerators of the Reed-Muller codes involved. The cases we consider share a common assumption that $(p-1)\lceil n/d\rceil=2$, that is, $p=2$ and $\lceil n/d\rceil=2$, or $p=3$ and $\lceil n/d\rceil=1$. Under this assumption, for each $i\in\mathcal I$ (Definition~\ref{D2.1}),
\begin{equation}\label{3.13}
\sum_{\uu}i(\uu)^{(j)}=2\quad\text{for all $0\le j\le m-1$}.
\end{equation}

\subsection{The case $q=2^m$ and $d=n/2$}\label{S3.2}\

Assume that $q=2^m$, $n\ge 4$ is even, and $d=n/2$. Let $f=\sum_{\uu\in U_{n/2}}a_{\uu}\X^{\uu}\in\f_q[X_1,\dots,X_n]$. Since $d\nmid n-1$, $\mathcal I'=\emptyset$ in Definition~\ref{D2.1}. Hence 
\begin{equation}\label{3.0}
E(f)=(-1)^{n+1}\sum_{i\in\mathcal I}\prod_{\uu}\frac{a_{\uu}^{i(\uu)}}{\gamma(i(\uu))}.
\end{equation}

If $i\in\mathcal I$, then
\[
\sum_{\uu\in U_d}i(\uu)=\sum_{\uu}\sum_{j=0}^{m-1}i(\uu)^{(j)}2^j=2\sum_{j=0}^{m-1}2^j=2(q-1).
\]
Since
\[
n(q-1)\le\sum_{\uu\in U_{n/2}}i(\uu)|\uu|\le\frac n2\sum_{\uu\in U_{n/2}}i(\uu)=n(q-1),
\]
we have $|\uu|=n/2$ for all $\uu\in U_{n/2}$ with $i(\uu)>0$ and we have
\begin{equation}\label{3.1}
\sum_{|\uu|=n/2}i(\uu)\uu=(q-1,\dots,q-1).
\end{equation}

\begin{lem}\label{L3.1}
$i\in\mathcal I$ if and only if there exist $\uu_j,\vv_j\in\{0,1\}^n$, $0\le j\le m-1$, with $|\uu_j|=|\vv_j|=n/2$, $\uu_j+\vv_j=(1,\dots,1)$, such that for all $0\le j\le m-1$,
\begin{equation}\label{3.2}
\begin{cases}
i(\uu_j)^{(j)}=i(\vv_j)^{(j)}=1,\vspace{1mm}\cr
i(\uu)^{(j)}=0&\text{if}\ \uu\in U_{n/2}\setminus\{\uu_j,\vv_j\}.
\end{cases}
\end{equation}
\end{lem}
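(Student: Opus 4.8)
The plan is to characterize membership in $\mathcal I$ by unpacking conditions (i) and (ii) of Definition~\ref{D2.1} in the special situation at hand, where \eqref{3.13} forces $\sum_{\uu}i(\uu)^{(j)}=2$ for every base-$p$ digit position $j$, and where we have already derived that every $\uu$ with $i(\uu)>0$ satisfies $|\uu|=n/2$ and that \eqref{3.1} holds. The ($\Leftarrow$) direction is the routine bookkeeping: given the $\uu_j,\vv_j$ as described, one checks directly that $\sum_{\uu}i(\uu)\uu=\sum_j 2^j(\uu_j+\vv_j)=\sum_j 2^j(1,\dots,1)=(q-1)(1,\dots,1)$, so every component is the positive multiple $q-1$ of $q-1$, verifying (i); and condition (ii)/\eqref{3.13} is immediate from $i(\uu_j)^{(j)}+i(\vv_j)^{(j)}=2$ with all other digits zero. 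One must note that $\uu_j$ and $\vv_j$ are genuinely distinct (since $n\ge 4$ means $n/2\ge 2$, so $\uu_j\ne\vv_j$ when $\uu_j+\vv_j=(1,\dots,1)$), so the two prescriptions $i(\uu_j)^{(j)}=1$, $i(\vv_j)^{(j)}=1$ are consistent.

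For the ($\Rightarrow$) direction, the key step is this: fix a digit position $j$ and look at the vector $\boldsymbol d_j := \sum_{\uu} i(\uu)^{(j)}\uu \in \mathbb{N}^n$. Summing over $j$ weighted by $2^j$ recovers $\sum_{\uu}i(\uu)\uu = (q-1,\dots,q-1)$ by \eqref{3.1}. Since each $\uu$ appearing has $|\uu|=n/2$ and $\sum_{\uu}i(\uu)^{(j)}=2$, we get $|\boldsymbol d_j| = 2\cdot(n/2) = n$, and each entry of $\boldsymbol d_j$ is a nonnegative integer bounded by $2$. The idea is then to read off the "base-$2$ expansion" of the constant vector $(q-1,\dots,q-1) = \sum_{j=0}^{m-1} 2^j(1,\dots,1)$ componentwise and argue that $\boldsymbol d_j = (1,\dots,1)$ for every $j$: indeed, the $\ell$-th component of $(q-1,\dots,q-1)$ is $q-1 = \sum_{j}2^j\cdot 1$, and since $(\boldsymbol d_j)_\ell \in \{0,1,2\}$ with $\sum_j 2^j (\boldsymbol d_j)_\ell = q-1$, a carry/uniqueness argument (e.g. the values $(\boldsymbol d_j)_\ell$ that are $0$ or $2$ must "cancel" against carries, and a short induction on $j$, together with the total-weight constraint $|\boldsymbol d_j|=n$ summed appropriately) pins down $(\boldsymbol d_j)_\ell = 1$ for all $j,\ell$. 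Hence $\boldsymbol d_j = (1,\dots,1)$ is a $0$-$1$ vector, so the multiset $\{\uu : i(\uu)^{(j)}>0\}$ consists of vectors in $\{0,1\}^n$ (no $\uu$ can have a component $\ge 2$, else $\boldsymbol d_j$ would) that sum with multiplicities $i(\uu)^{(j)}$ to $(1,\dots,1)$; since the total multiplicity is $2$, this forces exactly two of them, $\uu_j$ and $\vv_j$ each with multiplicity $1$, with $\uu_j+\vv_j=(1,\dots,1)$ (the case of one $\uu$ with $i(\uu)^{(j)}=2$ is excluded because $2\uu$ would have an even, hence $\ne 1$, component). Finally $|\uu_j|=|\vv_j|=n/2$ comes from $|\uu_j+\vv_j|=n$ together with $\uu_j+\vv_j=(1,\dots,1)$ and — wait, that only gives $|\uu_j|+|\vv_j|=n$; the equality $|\uu_j|=n/2$ is then exactly the earlier observation that every $\uu$ with $i(\uu)>0$ has $|\uu|=n/2$.

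I expect the main obstacle to be the componentwise uniqueness-of-expansion argument: a priori $\boldsymbol d_j$ could have components equal to $0$ or $2$, and one needs the global constraint $\sum_j 2^j \boldsymbol d_j = (q-1,\dots,q-1)$ to be compatible only with the all-ones choice at every level. The cleanest route is probably to argue by contradiction at the top digit $j=m-1$: if some component of $\boldsymbol d_{m-1}$ is $0$, then $\sum_{j\le m-1}2^j(\boldsymbol d_j)_\ell \le 2(1+2+\cdots+2^{m-2}) = 2^m-2 = q-2 < q-1$ in that component, a contradiction; if some component is $2$, then that component of the sum is $\ge 2^{m-1} + (\text{something} \ge 0) $, and one checks this can't equal $q-1$ unless lower digits compensate negatively, which is impossible. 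Iterating downward (having fixed $(\boldsymbol d_{m-1})_\ell=1$, repeat with $q-1$ replaced by $q-1-2^{m-1}$ on the reduced range $j\le m-2$) completes the induction. Once that is in place, everything else is immediate repackaging, so I would present the ($\Leftarrow$) direction in two lines and devote the bulk of the proof to this descent.
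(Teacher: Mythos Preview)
Your overall strategy for ($\Rightarrow$)---peel off the top binary digit, show the two contributing $\uu$'s at that level must be complementary $0$-$1$ vectors, subtract, and iterate---is exactly the paper's. The ($\Leftarrow$) direction is fine.

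There is, however, a genuine gap in your ($\Rightarrow$) argument. You assert that ``each entry of $\boldsymbol d_j$ is a nonnegative integer bounded by $2$,'' and then rely on this bound in the ``component is $0$'' half of your contradiction. But the $\uu$'s with $i(\uu)>0$ live a priori only in $U_{n/2}\subset\Bbb N^n$, not in $\{0,1\}^n$; a single component of a single $\uu$ can be as large as $n/2$. For instance with $n=4$, the vectors $(2,0,0,0)$ and $(1,1,0,0)$ both have $|\uu|=2=n/2$, and their sum has first entry $3$. So the bound $(\boldsymbol d_j)_\ell\le 2$ is not available, and your estimate $\sum_{j\le m-2}2^j(\boldsymbol d_j)_\ell\le 2(2^{m-1}-1)$ is unjustified.

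The repair is easy and collapses into the paper's argument: at the top digit, nonnegativity alone gives
\[
2^{m-1}(\boldsymbol d_{m-1})_\ell\le\sum_{j=0}^{m-1}2^j(\boldsymbol d_j)_\ell=q-1<2^m,
\]
so $(\boldsymbol d_{m-1})_\ell\le 1$ directly---no prior bound needed. Together with your correct observation $|\boldsymbol d_{m-1}|=n$, this forces $\boldsymbol d_{m-1}=(1,\dots,1)$; there is no separate ``component is $0$'' case to handle. Subtract $2^{m-1}(1,\dots,1)$ and iterate. In the paper's notation this is precisely the inequality
\[
2^{m-1}(\uu_{m-1}+\vv_{m-1})\le i(\uu_{m-1})\uu_{m-1}+i(\vv_{m-1})\vv_{m-1}\le\sum_{\uu}i(\uu)\uu=(q-1)(1,\dots,1),
\]
followed by $|\uu_{m-1}|+|\vv_{m-1}|=n$.
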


\begin{proof}
($\Rightarrow$) By Definition~\ref{D2.1}, 
\begin{equation}\label{3.3}
\sum_{|\uu|=n/2}i(\uu)^{(j)}=2\quad\text{for all}\ 0\le j\le m-1.
\end{equation}
Choose $\uu_{m-1}, \vv_{m-1}\in U_{n/2}$ with $|\uu_{m-1}|=|\vv_{m-1}|=n/2$ such that $i(\uu_{m-1})^{(m-1)}=i(\vv_{m-1})^{(m-1)}=1$. Since
\[
\begin{split}
(2^m-1)(1,\dots,1)\,&=\sum_{|\uu|=n/2}i(\uu)\uu\ge i(\uu_{m-1})\uu_{m-1}+i(\vv_{m-1})\vv_{m-1}\cr
&\ge 2^{m-1}(\uu_{m-1}+\vv_{m-1}),
\end{split}
\]
it follows that $\uu_{m-1}+\vv_{m-1}\le(1,\dots,1)$, that is, $\uu_{m-1},\vv_{m-1}\in\{0,1\}^n$ and $\uu_{m-1}+\vv_{m-1}=(1,\dots,1)$. For any $\uu\in U_{n/2}$ with $|\uu|=n/2$ and $\uu\ne \uu_{m-1},\vv_{m-1}$, we have $i(\uu)^{(m-1)}=0$ by \eqref{3.3}. 

Now we have
\[
\sum_{|\uu|=n/2}\sum_{j=0}^{m-2}i(\uu)^{(j)}2^j\uu=(2^{m}-1)(1,\dots,1)-2^{m-1}(1,\dots,1)=(2^{m-1}-1)(1,\dots,1).
\]
By the same argument, there exist $\uu_{m-2}, \vv_{m-2}\in\{0,1\}^n$ with $|\uu_{m-2}|=|\vv_{m-2}|=n/2$ and $\uu_{m-2}+\vv_{m-2}=(1,\dots,1)$ such that $i(\uu_{m-2})^{(m-2)}=i(\vv_{m-2})^{(m-2)}=1$ and $i(\uu)^{(m-2)}=0$ for all $\uu$ with $|\uu|=n/2$ and $\uu\ne\uu_{m-2},\vv_{m-2}$. Continuing this way, we have $\uu_j,\vv_j$, $0\le j\le m-1$, with the desired property.

\medskip
($\Leftarrow$) For each $0\le j\le m-1$,
\[
\sum_{\uu} i(\uu)^{(j)}=i(\uu_j)^{(j)}+i(\vv_j)^{(j)}=2=(p-1)\lceil n/d\rceil.
\]
Also,
\[
\begin{split}
\sum_{\uu}i(\uu)\uu\,&=\sum_{\uu}\Bigl(\sum_{j=0}^{m-1}i(\uu)^{(j)}2^j\Bigr)\uu=\sum_{j=0}^{m-1}2^j(\uu_j+\vv_j)\cr
&=\Bigl(\sum_{j=0}^{m-1}2^j\Bigr)(1,\dots,1)=(q-1)(1,\dots,1).
\end{split}
\]
Hence $i\in\mathcal I$.
\end{proof}

It follows from Lemma~\ref{L3.1} that
\begin{equation}\label{3.4}
\begin{split}
\sum_{i\in\mathcal I}\prod_{\uu}a_{\uu}^{i(\uu)}\,&=\sum_{\substack{\{\uu_0,\vv_0\},\dots,\{\uu_{m-1},\vv_{m-1}\}\cr \uu_j,\vv_j\in\{0,1\}^n,\,|\uu_j|=|\vv_j|=n/2\cr \uu_j+\vv_j=(1,\dots,1)}}a_{\uu_0}a_{\vv_0}(a_{\uu_1}a_{\vv_1})^2\cdots(a_{\uu_{m-1}}a_{\vv_{m-1}})^{2^{m-1}}\cr
&=\biggl(\sum_{\substack{\{\uu,\vv\}\cr \uu,\vv\in\{0,1\}^n,\, |\uu|=|\vv|=n/2\cr \uu+\vv=(1,\dots,1)}}a_{\uu}a_{\vv}\biggr)^{1+2+\cdots+2^{m-1}}.
\end{split}
\end{equation}
Combining Theorem~\ref{T2.2}, \eqref{3.0} and \eqref{3.4} gives the following corollary.

\begin{cor}\label{C3.2}
Let $q=2^m$ and $n\ge 4$ be even. Let 
\[
f=\sum_{\uu\in U_{n/2}}a_{\uu}\X^{\uu}\in\f_q[X_1,\dots,X_n].
\]
Then $v_2(|Z(f)|)\ge m+1$ if and only if
\begin{equation}\label{3.5}
\sum_{\substack{\{\uu,\vv\}\cr \uu,\vv\in\{0,1\}^n,\, |\uu|=|\vv|=n/2\cr \uu+\vv=(1,\dots,1)}}a_{\uu}a_{\vv}=0.
\end{equation}
\end{cor}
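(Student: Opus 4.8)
The plan is to apply Theorem~\ref{T2.2} directly in the setting $q=2^m$, $p=2$, $d=n/2$, so that $\lceil n/d\rceil=2$ and the threshold $m(\lceil n/d\rceil-1)+1$ becomes $m+1$. First I would observe that since $d=n/2$ does not divide $n-1$ (for $n\ge 4$, $\gcd(n/2,n-1)$ forces $n/2\mid 1$, impossible), Definition~\ref{D2.1} gives $\mathcal I'=\emptyset$, so \eqref{2.14.5} collapses to $E(f)=(-1)^{n+1}\sum_{i\in\mathcal I}\prod_{\uu}a_{\uu}^{i(\uu)}/\gamma(i(\uu))$; moreover, since $(p-1)\lceil n/d\rceil=2$, every digit sum $\sum_{\uu}i(\uu)^{(j)}$ equals $2$, which in characteristic $2$ means each $i(\uu)^{(j)}\in\{0,1\}$ and hence every $\gamma(i(\uu))=1$. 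So $E(f)=\pm\sum_{i\in\mathcal I}\prod_{\uu}a_{\uu}^{i(\uu)}$, and the sign is irrelevant for deciding whether $E(f)=0$.

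Next I would pin down $\mathcal I$ combinatorially. The degree bookkeeping already in the excerpt shows that for $i\in\mathcal I$ one has $\sum_{\uu}i(\uu)=2(q-1)$ and, by squeezing $n(q-1)\le\sum i(\uu)|\uu|\le (n/2)\sum i(\uu)$, that $i$ is supported on $\uu$ with $|\uu|=n/2$ and $\sum_{|\uu|=n/2}i(\uu)\uu=(q-1,\dots,q-1)$. The heart of the matter is Lemma~\ref{L3.1}: an $i$ lies in $\mathcal I$ exactly when, digit by digit, the support in bit $j$ consists of a single complementary pair $\uu_j,\vv_j\in\{0,1\}^n$ with $\uu_j+\vv_j=(1,\dots,1)$ and $|\uu_j|=n/2$, each carrying digit $1$. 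I would invoke Lemma~\ref{L3.1} as already proved, noting that the forward direction is the nontrivial part — it peels off the top bit $j=m-1$ using $(2^m-1)(1,\dots,1)=\sum i(\uu)\uu\ge 2^{m-1}(\uu_{m-1}+\vv_{m-1})$ to force $\uu_{m-1}+\vv_{m-1}\le(1,\dots,1)$, hence equality by the degree-$n/2$ constraint, then recurses on the remaining digits against $(2^{m-1}-1)(1,\dots,1)$.

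With the parametrization in hand, the sum over $\mathcal I$ factors: choosing $i$ is the same as choosing an unordered complementary pair $\{\uu_j,\vv_j\}$ for each bit position $j$ independently, and the monomial attached to $i$ is $\prod_{j=0}^{m-1}(a_{\uu_j}a_{\vv_j})^{2^j}$. Summing over all choices gives $\sum_{i\in\mathcal I}\prod_\uu a_\uu^{i(\uu)}=\prod_{j=0}^{m-1}\bigl(\sum_{\{\uu,\vv\}}a_\uu a_\vv\bigr)^{2^j}=\bigl(\sum_{\{\uu,\vv\}}a_\uu a_\vv\bigr)^{2^m-1}$, the sum ranging over unordered pairs $\{\uu,\vv\}\subseteq\{0,1\}^n$ with $|\uu|=|\vv|=n/2$ and $\uu+\vv=(1,\dots,1)$ — exactly \eqref{3.4}. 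Since a power of an element of $\f_q$ vanishes iff the element does, $E(f)=0$ iff $\sum_{\{\uu,\vv\}}a_\uu a_\vv=0$, and Theorem~\ref{T2.2} turns this into $\nu_2(|Z(f)|)\ge m+1$, which is the claim of Corollary~\ref{C3.2}. The only genuine obstacle is Lemma~\ref{L3.1}'s forward implication, i.e.\ justifying that the digit-by-digit support is forced to be complementary pairs; everything downstream is algebraic rearrangement in characteristic $2$, and that lemma is supplied with its own proof in the excerpt.
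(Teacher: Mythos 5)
Your proposal is correct and follows essentially the same route as the paper: it invokes Theorem~\ref{T2.2}, observes $\mathcal I'=\emptyset$ because $n/2\nmid n-1$ for $n\ge 4$, uses Lemma~\ref{L3.1} to parametrize $\mathcal I$ by independent choices of complementary pairs per bit position, and factors the resulting sum exactly as in \eqref{3.4} to reduce $E(f)=0$ to the vanishing of the quadratic expression \eqref{3.5}. No substantive differences from the paper's argument.
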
 

Replacing each $a_{\uu}$ in \eqref{3.5} by an indeterminate $Y_{\uu}$, we obtain a quadratic form
\[
Q=\sum_{\substack{\{\uu,\vv\}\cr \uu,\vv\in\{0,1\}^n,\, |\uu|=|\vv|=n/2\cr \uu+\vv=(1,\dots,1)}}Y_{\uu}Y_{\vv}
\]
in $N=\binom n{n/2}$ indeterminates over $\f_q$. Order the indeterminates in a row $\Y=(Y_{\uu}:\uu\in\{0,1\}^n,\ |\uu|=n/2)$ such that the indices $\uu$ and $\uu^c:=(1,\dots,1)-\uu$ appear in positions symmetric to the center of the row. Then 
\[
Q=\Y A\Y^t,
\]
where
\[
A=
\left[
\begin{matrix}
0&\Delta_{N/2}\cr
0&0
\end{matrix}\right]_{N\times N}
\] 
and $\Delta_{N/2}$ is defined in \eqref{Delta}.
By \cite[Theorem~6.32]{LN}, the number of roots of $Q$ in $\f_q^N$ is 
\begin{equation}\label{3.6}
q^{N-1}+(q-1)q^{\frac 12N-1}.
\end{equation}

\begin{cor}\label{C3.3}
Let $q=2^m$ and $n\ge 4$ be even. Then
\begin{equation}\label{3.8}
N_q(n/2,n;\,m+1)=\Bigl(q^{\binom n{n/2}-1}+(q-1)q^{\frac 12\binom n{n/2}-1}\Bigr)q^{\dim_{\f_q}R_q(n/2,n)-\binom n{n/2}},
\end{equation}
where
\begin{equation}\label{3.9}
\dim_{\f_q}R_q(n/2,n)=\sum_{j\le\lfloor n/2q\rfloor}(-1)^j\binom nj\binom{\frac{3n}2-qj}n.
\end{equation}
\end{cor}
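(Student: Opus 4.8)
The proof of Corollary~\ref{C3.3} is essentially a bookkeeping argument, combining the characterization in Corollary~\ref{C3.2} with the root count \eqref{3.6} and a standard fibration of the Reed--Muller code over its ``top layer'' of coefficients. I will spell out how the three ingredients fit together.

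\medskip

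First I would fix notation: let $U_{n/2}=\{\uu\in\Bbb N^n:|\uu|\le n/2\}$ index the coefficients of a general $f\in R_q(n/2,n)$, and let $W=\{\uu\in\{0,1\}^n:|\uu|=n/2\}$, so $|W|=\binom{n}{n/2}=N$. Note $W\subseteq U_{n/2}$. The key structural observation is that by Corollary~\ref{C3.2}, the condition $\nu_2(|Z(f)|)\ge m+1$ depends only on the subfamily of coefficients $(a_{\uu})_{\uu\in W}$, and in fact only through the vanishing of the quadratic form $Q=\Y A\Y^t$ evaluated at $\Y=(a_{\uu})_{\uu\in W}$. Therefore, partitioning $R_q(n/2,n)$ according to the values of the coefficients $(a_{\uu})_{\uu\in W}$, each fiber has size $q^{\dim_{\f_q}R_q(n/2,n)-N}$ (the remaining coefficients being free), and a fiber lies entirely inside $\{f:\nu_2(|Z(f)|)\ge m+1\}$ or entirely outside it, according to whether the corresponding point of $\f_q^N$ is a zero of $Q$. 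Consequently
\[
N_q(n/2,n;\,m+1)=\bigl|\{\,\y\in\f_q^N: Q(\y)=0\,\}\bigr|\cdot q^{\dim_{\f_q}R_q(n/2,n)-N}.
\]

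\medskip

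Next I would justify that the number of zeros of $Q$ in $\f_q^N$ is exactly \eqref{3.6}. With the indeterminates ordered so that $\uu$ and $\uu^c=(1,\dots,1)-\uu$ occupy positions symmetric about the center of the row $\Y$, the pairs $\{\uu,\vv\}$ with $\uu+\vv=(1,\dots,1)$ are precisely the pairs $\{\uu,\uu^c\}$, and these are exactly the pairs of positions symmetric about the center; hence $Q=\sum Y_{\uu}Y_{\uu^c}$ has Gram-type matrix $A=\left[\begin{smallmatrix}0&\Delta_{N/2}\\0&0\end{smallmatrix}\right]$ as displayed. (Here one uses that $n$ is even so $n/2$ is an integer, and that $\uu\ne\uu^c$ always, so no variable is paired with itself and $N$ is even.) This $Q$ is a nondegenerate quadratic form of hyperbolic (``plus'') type in $N$ variables over $\f_q$ with $q$ even, so \cite[Theorem~6.32]{LN} gives the count $q^{N-1}+(q-1)q^{N/2-1}$, which is \eqref{3.6}. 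Substituting this into the displayed fiber identity yields \eqref{3.8}. Finally, \eqref{3.9} is just \eqref{dim} specialized to $d=n/2$: $\lfloor d/q\rfloor=\lfloor n/(2q)\rfloor$ and $d-qj+n=\frac{3n}{2}-qj$.

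\medskip

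The only point requiring a little care — and the one I would expect to be the main obstacle — is confirming that $Q$ is genuinely nondegenerate of plus type, i.e.\ that \cite[Theorem~6.32]{LN} applies verbatim. In characteristic $2$ one must be slightly careful about what ``quadratic form of the first type'' means and about the rank; writing $Q$ explicitly as a sum of $N/2$ hyperbolic planes $Y_{\uu}Y_{\uu^c}$ settles this at once, since a direct sum of hyperbolic planes is by definition nondegenerate of Witt index $N/2$, the maximal possible, which is the ``plus'' case in the notation of \cite{LN}. Everything else — the fibration over the top-layer coefficients, the constancy of the condition on each fiber, and the arithmetic of the exponents — is routine.
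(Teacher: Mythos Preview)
Your argument is correct and is exactly the approach the paper takes: the paper's proof of Corollary~\ref{C3.3} is the two-line statement that \eqref{3.8} follows from Corollary~\ref{C3.2} together with \eqref{3.6}, and that \eqref{3.9} is the specialization of \eqref{dim}. You have simply unpacked these two sentences, including the fibration over the coefficients indexed by $W$ and the identification of $Q$ as a sum of $N/2$ hyperbolic planes.
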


\begin{proof}\eqref{3.8} follows from Corollary~\ref{C3.2} and \eqref{3.6}; \eqref{3.9} follows from \eqref{dim}.
\end{proof}

In the remaining three subsections, arguments and computations are similar to those in Subsection~\ref{S3.2}. Therefore, a fair amount of details is omitted. 


\subsection{The case $q=2$ and $n/2\le d\le n-2$}\

Assume that $q=2$, $n\ge 4$, and $n/2\le d\le n-2$. Let $f=\sum_{\uu\in U_d}a_{\uu}\X^{\uu}\in\f_2[X_1,\dots,X_n]$. Then $\mathcal I'=\emptyset$ and 
\[
E(f)=(-1)^{n+1}\sum_{i\in\mathcal I}\prod_{\uu}\frac{a_{\uu}^{i(\uu)}}{\gamma(i(\uu))}.
\]
Moreover, $i\in\mathcal I$ if and only if there exist $\uu_0,\vv_0\in U_d\cap\{0,1\}^n$ with $\uu+\vv=(1,\dots,1)$ such that 
\[
\begin{cases}
i(\uu_0)=i(\vv_0)=1,\vspace{1mm}\cr
i(\uu)=0&\text{for all}\ \uu\in U_d\setminus\{\uu_0,\vv_0\}. 
\end{cases}
\]
Consequently, 
\[
\sum_{i\in\mathcal I}\prod_{\uu}\frac{a_{\uu}^{i(\uu)}}{\gamma(i(\uu))}=\sum_{\substack{\{\uu_0,\vv_0\}\cr \uu_0,\vv_0\in\{0,1\}^n,\,|\uu_0|,|\vv_0|\in[n-d,d]\cr \uu_0+\vv_0\ge(1,\dots,1)}}a_{\uu_0}a_{\vv_0}.
\]
Thus $\nu_2(|Z(f)|)\ge 2$ if and only if $(a_{\uu}:\uu\in\{0,1\}^n,\ |\uu|\in[n-d,d])$ is a root of the quadratic form
\[
Q=\sum_{\substack{\{\uu,\vv\}\cr \uu,\vv\in\{0,1\}^n,\,|\uu|,|\vv|\in[n-d,d]\cr \uu+\vv\ge(1,\dots,1)}}Y_{\uu}Y_{\vv}.
\]
Order the indeterminates of $Q$ in a row $\Y=(Y_{\uu}:\uu\in\{0,1\}^n,\ |\uu|\in[n-d,d])$ such that $|\uu|$ is increasing and the indices $\uu$ and $\uu^c:=(1,\dots,1)-\uu$ appear in positions symmetric to the center of the row. Then 
\[
Q=\Y A\Y^T,
\]
where
\[
A=\left[\kern2.5cm\begin{matrix}
&& & & 1\cr
&& &\cdot&*\cr
&&\cdot&\cdot &\cdot\cr
&\cdot&\cdot&&\cdot\cr
1&*&\cdot&\cdot&*\cr
*&*&\cdot&\cdot&*\cr
&\cdot&\cdot&&\cdot\cr
&&\cdot&\cdot &\cdot\cr
&& &\cdot&*\cr
&& & & *
\end{matrix}\right]_{N\times N},
\qquad N=\sum_{j=n-d}^d\binom nj.
\]
(The unmarked entries of $A$ are all $0$.) There exists $P\in\text{GL}(N,\f_2)$ such that 
\[
PAP^T=\left[\begin{matrix}0&\Delta_{N/2}\cr 0&0\end{matrix}\right].
\]
Therefore the number of roots of $Q$ in $\f_2^N$ is $2^{N-1}+2^{\frac 12N-1}$ \cite[Theorem~6.32]{LN}. 

\begin{cor}\label{C3.9}
For $n\ge 4$ and $n/2\le d\le n-2$,
\[
N_2(d,n;2)=2^{\binom n0+\cdots+\binom nd-1}+2^{2^{n-1}-1}.
\]
\end{cor}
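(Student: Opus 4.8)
\textbf{Proof proposal for Corollary~\ref{C3.9}.}
The plan is to combine Theorem~\ref{T2.2} with an explicit analysis of the set $\mathcal I$ exactly as in Subsection~\ref{S3.2}, then count the zeros of the resulting quadratic form. First, since $q=2$ forces $m=1$ and the hypothesis $n/2\le d\le n-2$ gives $\lceil n/d\rceil=2$ and $d\nmid n-1$ (because $d\le n-2<n-1$ and $d\ge n/2>(n-1)/2$), we have $\mathcal I'=\emptyset$, so $E(f)=(-1)^{n+1}\sum_{i\in\mathcal I}\prod_{\uu}a_{\uu}^{i(\uu)}/\gamma(i(\uu))$. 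Next I would unwind Definition~\ref{D2.1} in this case: condition (ii) reads $\sum_{\uu}i(\uu)=2$ (one digit, $j=0$ only), and since each $i(\uu)\in\{0,1\}$ this means $i$ is supported on a two-element set $\{\uu_0,\vv_0\}\subseteq U_d$ with $i(\uu_0)=i(\vv_0)=1$; condition (i) then says every coordinate of $\uu_0+\vv_0$ is a positive multiple of $q-1=1$, i.e.\ $\uu_0+\vv_0\ge(1,\dots,1)$. Together with $\uu_0,\vv_0\in U_d$ this also forces $\uu_0,\vv_0\in\{0,1\}^n$ (if some coordinate of $\uu_0$ were $\ge 2$ then $|\uu_0|\ge2$ is fine, but $\uu_0+\vv_0\ge\boldsymbol 1$ with $|\uu_0|+|\vv_0|\le 2d\le 2n-4$ and $|\uu_0+\vv_0|\ge n$ is automatically consistent; the $\{0,1\}$ bound actually comes directly from $\deg_{X_j}f\le q-1=1$, which is built into $R_2(d,n)$, but I would state it via the coordinatewise bound coming from $i(\uu)^{(0)}\le 1$ and the degree restriction). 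I should double-check the degree bounds $|\uu_0|,|\vv_0|\in[n-d,d]$: $|\uu_0|\le d$ since $\uu_0\in U_d$, and $|\uu_0|=n-|\vv_0|\ge n-d$ since $\uu_0+\vv_0=(1,\dots,1)$. Thus (with $\gamma(1)=1$)
\[
E(f)=(-1)^{n+1}\sum_{\substack{\{\uu,\vv\}\cr \uu,\vv\in\{0,1\}^n,\,|\uu|,|\vv|\in[n-d,d]\cr \uu+\vv\ge(1,\dots,1)}}a_{\uu}a_{\vv},
\]
and by Theorem~\ref{T2.2}, $\nu_2(|Z(f)|)\ge 2$ iff this sum vanishes.

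The second half is a linear-algebra computation over $\f_2$. Replacing each $a_{\uu}$ by an indeterminate $Y_{\uu}$ (indexed by the $N=\sum_{j=n-d}^{d}\binom nj$ vectors $\uu\in\{0,1\}^n$ with $|\uu|\in[n-d,d]$) yields the quadratic form $Q=\Y A\Y^T$ displayed in the text. The key structural observation, identical to the one used for Corollary~\ref{C3.2}, is that the map $\uu\mapsto\uu^c=(1,\dots,1)-\uu$ is a fixed-point-free involution on the index set (fixed-point-free since $n$ is even would be needed for $\uu=\uu^c$, but in any case $|\uu|=|\uu^c|$ means both lie in $[n-d,d]$ simultaneously, and $\uu=\uu^c$ is impossible for $\uu\in\{0,1\}^n$ regardless), so $N$ is even; moreover $\uu+\vv\ge(1,\dots,1)$ holds whenever $\vv=\uu^c$, and for the purpose of congruence of $Q$ one checks that the symmetric-bilinear-form part $A+A^T$ has rank $N$ with the appropriate structure. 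Concretely, I would exhibit $P\in\mathrm{GL}(N,\f_2)$ with $PAP^T=\left[\begin{smallmatrix}0&\Delta_{N/2}\\ 0&0\end{smallmatrix}\right]$ by a block row/column reduction that clears the $*$'s against the anti-diagonal $1$'s; alternatively one notes that $Q$, as a quadratic form in characteristic $2$, is nondegenerate of $+$ type (hyperbolic) because it is equivalent to a sum of $N/2$ hyperbolic planes $Y_{\uu}Y_{\uu^c}$ after the change of variables. Then \cite[Theorem~6.32]{LN} gives the number of zeros of a nondegenerate quadratic form of $+$ type in $N$ variables over $\f_2$ as $2^{N-1}+2^{N/2-1}$.

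Finally I would assemble the count. The codewords of $R_2(d,n)=R_2(d,n)$ are parametrized by the $\dim_{\f_2}R_2(d,n)=\binom n0+\cdots+\binom nd$ coefficients $a_{\uu}$, $\uu\in U_d\cap\{0,1\}^n$; among these, only the $N$ coefficients with $|\uu|\in[n-d,d]$ enter $E(f)$, while the remaining $\dim_{\f_2}R_2(d,n)-N$ are free. Hence the number of $f$ with $E(f)=0$ is $(2^{N-1}+2^{N/2-1})\cdot 2^{\dim_{\f_2}R_2(d,n)-N}$. Since $N=\sum_{j=n-d}^{d}\binom nj=\bigl(\sum_{j=0}^{n}-\sum_{j=0}^{n-d-1}-\sum_{j=d+1}^{n}\bigr)\binom nj$ and $n-d-1\le d$ (equivalently $d\ge(n-1)/2$) makes the two truncated sums complementary, $\dim_{\f_2}R_2(d,n)-N=\sum_{j=0}^{n-d-1}\binom nj$, and one computes $\tfrac12 N+\bigl(\dim_{\f_2}R_2(d,n)-N\bigr)$; using $\sum_{j=0}^{n-d-1}\binom nj+\tfrac12\sum_{j=n-d}^{d}\binom nj=\tfrac12\cdot 2^n=2^{n-1}$ (valid since the truncated sum from $0$ to $n-d-1$ equals the one from $d+1$ to $n$), the exponent of the second term collapses to $2^{n-1}-1$, giving
\[
N_2(d,n;2)=2^{\binom n0+\cdots+\binom nd-1}+2^{2^{n-1}-1}.
\]
The main obstacle is the linear-algebra step: verifying that the explicit matrix $A$ really is congruent to $\left[\begin{smallmatrix}0&\Delta_{N/2}\\ 0&0\end{smallmatrix}\right]$ over $\f_2$ — i.e.\ that the form $Q$ is hyperbolic of full rank — because the condition $\uu+\vv\ge(1,\dots,1)$ allows pairs other than $\{\uu,\uu^c\}$, so the $*$-entries of $A$ are genuinely present and one must check they do not lower the rank; this is exactly the point where the hypothesis $d\le n-2$ (rather than $d\le n-1$) and $d\ge n/2$ are used, and I would carry it out by the same symmetric row/column operations sketched for Corollary~\ref{C3.3}.
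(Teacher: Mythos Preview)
Your proposal is correct and follows essentially the same route as the paper's Subsection~3.3: characterize $\mathcal I$ via the two-element support condition $\sum_{\uu}i(\uu)=2$, identify the resulting quadratic form $Q$, assert the congruence $PAP^T=\left[\begin{smallmatrix}0&\Delta_{N/2}\\0&0\end{smallmatrix}\right]$, count zeros as $2^{N-1}+2^{N/2-1}$, and multiply by the free coefficients (your final arithmetic simplification is in fact more explicit than the paper's). Two small slips worth fixing: $|\uu^c|=n-|\uu|$, not $|\uu|$ (the interval $[n-d,d]$ is nonetheless preserved by $\uu\mapsto\uu^c$, so the conclusion stands), and the hypothesis $d\le n-2$ is what guarantees $d\nmid n-1$ and hence $\mathcal I'=\emptyset$---it is not used in the linear-algebra step.
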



\subsection{The case $q=3^m$ and $d=n$}\

Assume that $q=3^m$, $n\ge 2$, and $d=n$. Let $f=\sum_{\uu\in U_n}a_{\uu}\X^{\uu}\in\f_q[X_1,\dots,X_n]$. Then $\mathcal I'=\emptyset$. Moreover, $i\in\mathcal I$ if and only if there exist $\uu_j,\vv_j\in\{0,1,2\}^n$, $0\le j\le m-1$, with $|\uu_j|=|\vv_j|=n$ and $\uu_j+\vv_j=(2,\dots,2)$ such that for all $0\le j\le m-1$, 
\[
\begin{cases}
i(\uu_j)^{(j)}=i(\vv_j)^{(j)}=1&\text{if}\ \uu_j\ne\vv_j,\cr
i(\uu_j)^{(j)}=2&\text{if}\ \uu_j=\vv_j,\cr
i(\uu)^{(j)}=0&\text{if}\ \uu\in U_n\setminus\{\uu_j,\vv_j\}.
\end{cases}
\] 
We have 
\[
E(f)=(-1)^{n+m+1}\biggl(\sum_{\substack{\{\uu,\vv\}\cr \uu,\vv\in\{0,1,2\},\,|\uu|=|\vv|=n\cr \uu+\vv=(2,\dots,2)}}a_{\uu}a_{\vv}\biggr)^{1+3+\cdots+3^{m-1}}.
\]
Thus $\nu_3(|Z(f)|)\ge 1$ if and only if $(a_{\uu}:\uu\in\{0,1,2\}^n,\ |\uu|=n)$ is a root of the quadratic form
\[
Q=\sum_{\substack{\{\uu,\vv\}\cr \uu,\vv\in\{0,1,2\},\,|\uu|=|\vv|=n\cr \uu+\vv=(2,\dots,2)}}Y_{\uu}Y_{\vv}.
\]
Order the indeterminates of $Q$ in a row $\Y=(Y_{\uu}:\uu\in\{0,1,2\}^n,\ |\uu|=n)$ such that the indices $\uu$ and $\uu^c:=(2,\dots,2)-\uu$ appear in positions symmetric to the center of the row. Then
\[
Q=\Y A\Y^T,
\]
where 
\[
A=\left[\begin{matrix} 0&\Delta_{(N+1)/2}\cr 0&0\end{matrix}\right]_{N\times N},\qquad N=\sum_{j\le n/2}\binom nj\binom{n-j}{n-2j}.
\]
The number of roots of $Q$ in $\f_q^N$ is $q^{N-1}$ \cite[Theorem~6.27]{LN}. 

\begin{cor}\label{C3.7} 
Let $q=3^m$ and $n\ge 2$. Then
\[
N_q(n,n;1)=q^{\dim_{\f_q}R_q(n,n)-1},
\]
where
\[
\dim_{\f_q}R_q(n,n)=\sum_{j\le\lfloor n/q\rfloor}(-1)^j\binom nj\binom{2n-qj}n.
\]
\end{cor}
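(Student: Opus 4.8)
The plan is to follow the template already established in Subsections 3.2--3.4, so the proof of Corollary~\ref{C3.7} is a routine instantiation of Theorem~\ref{T2.2} under the hypothesis $(p-1)\lceil n/d\rceil=2$ with $p=3$, $\lceil n/d\rceil=1$. First I would verify that $\mathcal I'=\emptyset$: here $d=n$, so $d\mid n-1$ fails for $n\ge 2$, hence by Definition~\ref{D2.1} the set $\mathcal I'$ is empty and $E(f)=(-1)^{n+m\lceil n/d\rceil}(-1)\sum_{i\in\mathcal I}\prod_{\uu}a_{\uu}^{i(\uu)}/\gamma(i(\uu))$, which with $\lceil n/d\rceil=1$ gives the sign $(-1)^{n+m+1}$.

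Next I would characterize $\mathcal I$. For $i\in\mathcal I$ condition (ii) of Definition~\ref{D2.1} reads $\sum_{\uu}i(\uu)^{(j)}=2$ for each $0\le j\le m-1$ (this is \eqref{3.13}), so $\sum_{\uu}i(\uu)=2(q-1)$; combined with condition (i) and the inequality $n(q-1)\le\sum_{\uu}i(\uu)|\uu|\le n\sum_{\uu}i(\uu)/ ? $, more precisely $\sum i(\uu)|\uu|\le n\cdot 2(q-1)$ and each nonzero multiple in (i) is $\ge q-1$ so $\sum i(\uu)|\uu|\ge n(q-1)$, one forces $|\uu|=n$ whenever $i(\uu)>0$ and $\sum_{|\uu|=n}i(\uu)\uu=(q-1)(2,\dots,2)$ --- wait, $\sum i(\uu)\uu$ has all components equal to the same multiple of $q-1$ and total mass $2n(q-1)$ spread over $n$ coordinates, so each component is $2(q-1)$. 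Then the digit-extraction argument of Lemma~\ref{L3.1}, applied with base $3$ instead of base $2$, yields: for each digit position $j$ there is a pair $\uu_j,\vv_j\in\{0,1,2\}^n$ with $|\uu_j|=|\vv_j|=n$, $\uu_j+\vv_j=(2,\dots,2)$, carrying that digit (with $i(\uu_j)^{(j)}=2$ if $\uu_j=\vv_j$ and $=1$ each if distinct), all other $i(\uu)^{(j)}=0$. The only new feature versus the $q=2$ case is the self-paired possibility $\uu_j=\vv_j=(1,\dots,1)$; I would note that the corresponding monomial $a_{(1,\dots,1)}^2$ is exactly the diagonal term $Y_{\uu}Y_{\uu}$ with $\uu=\uu^c$, which fits the quadratic form $Q$ with the $\Delta_{(N+1)/2}$ block having a $1$ on its own anti-diagonal center.

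Then, exactly as in \eqref{3.4}, I would factor $\sum_{i\in\mathcal I}\prod_\uu a_\uu^{i(\uu)}/\gamma(i(\uu))$ as a $(1+3+\cdots+3^{m-1})$-th power of $\sum_{\{\uu,\vv\}}a_\uu a_\vv$ over the indicated index set (noting $\gamma=1$ for digits $0,1$ and $\gamma(2)=2!$, but these scalar factors are absorbed consistently, or one checks that the self-paired term contributes $a_{(1,\dots,1)}^2/2$ which still matches $Q$ up to the harmless unit $2\in\f_3^\times$ --- this is the one subtlety worth a sentence). Hence $\nu_3(|Z(f)|)\ge 1$ iff $(a_\uu)$ is a zero of $Q$. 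I would order the $Y_\uu$ so that $\uu$ and $\uu^c=(2,\dots,2)-\uu$ are placed symmetrically about the center; the central index, if it occurs, is the fixed point $(1,\dots,1)$, so $N=\sum_{j\le n/2}\binom nj\binom{n-j}{n-2j}$ is odd and $A=\left[\begin{smallmatrix}0&\Delta_{(N+1)/2}\\0&0\end{smallmatrix}\right]$. By \cite[Theorem~6.27]{LN} an $N$-variable quadratic form of rank $N$ over $\f_q$ (here the rank of $A+A^T$, or in odd characteristic simply the rank of the associated symmetric matrix, which is $N$) has exactly $q^{N-1}$ zeros. Multiplying by $q^{\dim R_q(n,n)-N}$ for the free coefficients and invoking \eqref{dim} for the dimension gives the stated formula for $N_q(n,n;1)$.

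The main obstacle is not conceptual but bookkeeping: I must make sure the count $N=\sum_{j\le n/2}\binom nj\binom{n-j}{n-2j}$ of ternary vectors of weight $n$ is correct (choose $j$ coordinates to be $2$, then from the remaining $n-j$ choose $n-2j$ to be $1$ so the sum is $2j+(n-2j)=n$), that $N$ is odd exactly because the involution $\uu\mapsto\uu^c$ has the single fixed point $(1,\dots,1)$, and that the self-paired digit case is folded into $Q$ with the right normalization in $\f_3$ (where $2$ is a unit, so $\gamma(2)=2$ causes no collapse of the zero set). Once those are checked, the corollary follows verbatim from Theorem~\ref{T2.2}, \eqref{dim}, and the nondegenerate-quadratic-form count.
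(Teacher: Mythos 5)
Your proposal follows the paper's route exactly: $\mathcal I'=\emptyset$, the base-$3$ analogue of Lemma~\ref{L3.1}, the factorization of $\sum_{i\in\mathcal I}\prod_{\uu}a_{\uu}^{i(\uu)}/\gamma(i(\uu))$ as a $(1+3+\cdots+3^{m-1})$-th power of a quadratic form, and the count $q^{N-1}$ from \cite[Theorem~6.27]{LN} with $N$ odd. Your attention to the two features absent from the $p=2$ case --- the self-paired index $(1,\dots,1)$ and the factor $\gamma(2)=2$, which is a unit in $\f_3$ and so cannot change the zero set of the quadratic form --- is exactly the right place to spend a sentence, and your count $N=\sum_{j\le n/2}\binom nj\binom{n-j}{n-2j}$ and the parity argument for $N$ are correct.

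There is, however, one arithmetic slip that breaks the step where you pin down $\mathcal I$, and you visibly stumbled over it (the ``$/\,?$'' and ``wait''). For $p=3$, condition (ii) gives $\sum_{\uu}i(\uu)=\sum_{j}3^j\sum_{\uu}i(\uu)^{(j)}=2(1+3+\cdots+3^{m-1})=q-1$, \emph{not} $2(q-1)$; the coefficient $2$ is eaten by the geometric sum $\frac{3^m-1}{3-1}$. With your value the squeeze does not close: you get $n(q-1)\le\sum_{\uu}i(\uu)|\uu|\le 2n(q-1)$, which forces neither $|\uu|=n$ on the support of $i$ nor the value of the components of $\sum_{\uu}i(\uu)\uu$, and your subsequent claim that each component equals $2(q-1)$ is in fact inconsistent with the conclusion $\uu_j+\vv_j=(2,\dots,2)$ you then assert (that conclusion forces each component to be $\sum_j 3^j\cdot 2=q-1$). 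With the corrected value $\sum_{\uu}i(\uu)=q-1$ the two bounds coincide at $n(q-1)$, equality holds throughout, $|\uu|=n$ whenever $i(\uu)>0$, each component of $\sum_{\uu}i(\uu)\uu$ equals $q-1$, and the top-digit extraction then yields $\uu_{m-1}+\vv_{m-1}\le\lfloor(q-1)/3^{m-1}\rfloor(1,\dots,1)\le(2,\dots,2)$ together with $|\uu_{m-1}+\vv_{m-1}|=2n$, hence equality. Once this is repaired the rest of your argument goes through verbatim and agrees with the paper.
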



\subsection{The case $q=3$ and $n\le d\le 2n$}\

Assume that $q=3$, $n\ge 2$, and $n\le d\le 2n$. Let $f=\sum_{\uu\in U_d}a_{\uu}\X^{\uu}\in\f_q[X_1,\dots,X_n]$. Then $\mathcal I'=\emptyset$. Moreover, $i\in\mathcal I$ if and only if there exist $\uu_0,\vv_0\in\{0,1,2\}^n$ with $\uu_0\equiv\vv_0\pmod 2$ and $\uu_0+\vv_0\ge(2,\dots,2)$ such that 
\[
\begin{cases}
i(\uu_0)=i(\vv_0)=1&\text{if}\ \uu_0\ne \vv_0,\cr
i(\uu_0)=2&\text{if}\ \uu_0=\vv_0,\cr
i(\uu)=0&\text{if}\ \uu\in U_d\setminus\{\uu_0,\vv_0\}.
\end{cases}
\]
We have
\[
E(f)=(-1)^{n+m+1}\sum_{\substack{\{\uu,\vv\}\cr
\uu,\vv\in\{0,1,2\}^n,\,|\uu|,|\vv|\in[2n-d,d]\cr
\uu\equiv\vv\,\text{(mod\,$2$)},\,\uu+\vv\ge (2,\dots,2)}} a_{\uu}a_{\vv}.
\]
Thus $\nu_3(|Z(f)|)\ge 1$ if and only if $(a_{\uu}:\uu\in\{0,1,2\}^n,\,|\uu|\in[2n-d,d])$ is a root of the quadratic form
\[
Q=\sum_{\substack{\{\uu,\vv\}\cr
\uu,\vv\in\{0,1,2\}^n,\,|\uu|,|\vv|\in[2n-d,d]\cr
\uu\equiv\vv\,\text{(mod\,$2$)},\,\uu+\vv\ge (2,\dots,2)}} Y_{\uu}Y_{\vv}.
\]
Order the indeterminates of $Q$ in a row $\Y=(Y_{\uu}:\uu\in\{0,1,2\}^n,\,|\uu|\in[2n-d,d])$ such that $|\uu|$ is increasing and the indices $\uu$ and $\uu^c:=(2,\dots,2)-\uu$ appear in positions symmetric to the center of the row. Then
\[
Q=\Y A\Y^T,
\]
where 
\[
A=\left[\kern2.5cm\begin{matrix}
&& & & 1\cr
&& &\cdot&*\cr
&&\cdot&\cdot &\cdot\cr
&\cdot&\cdot&&\cdot\cr
1&*&\cdot&\cdot&*\cr
&*&\cdot&\cdot&*\cr
&&\cdot&\cdot &\cdot\cr
&& &\cdot&\cdot\cr
&& & & *
\end{matrix}\right]_{N\times N},
\] 
\[
N=\sum_{j=2n-d}^d\bigl|\bigl\{\uu\in\{0,1,2\}^n: |\uu|=j\bigr\}\bigr|.
\]
There exists $P\in\text{GL}(N,\f_3)$ such that 
\[
PAP^T=\left[\begin{matrix} 0&\Delta_{(N+1)/2}\cr 0&0\end{matrix}\right].
\] 
Hence the number of roots of $Q$ in $\f_3^N$ is $3^{N-1}$ \cite[Theorem~6.27]{LN}. 

\begin{cor}\label{C3.11}
Let $n\ge 2$ and $n\le d\le 2n$. Then
\[ 
N_3(d,n;1)=3^{\dim_{\f_3}R_3(d,n)-1},
\]
where
\[
\dim_{\f_3}R_3(d,n)=\sum_{j\le\lfloor d/3\rfloor}(-1)^j\binom nj\binom{d-3j+n}n.
\]
\end{cor}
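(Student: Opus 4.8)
The plan is to follow the template already established in Subsection~3.2 and reused in the subsequent subsections, now with $q=3$ and $n\le d\le 2n$ (so $\lceil n/d\rceil=1$, $p=3$, and the common assumption $(p-1)\lceil n/d\rceil=2$ holds). First I would note that since $d\ge n$ we have $d\nmid n-1$ is not automatic, but in fact $\lceil n/d\rceil=1$ forces $\mathcal I'=\emptyset$ because the definition of $\mathcal I'$ requires $d\mid n-1$ and $(p-1)(n-1)/d$ to be the digit-sum target; more directly, one checks that the second case of Fact~\ref{F2.1} cannot contribute here, so $E(f)=(-1)^{n+m\lceil n/d\rceil}(-1)\sum_{i\in\mathcal I}\prod_{\uu}a_{\uu}^{i(\uu)}/\gamma(i(\uu))$, and $m\lceil n/d\rceil=m$ gives the sign $(-1)^{n+m+1}$ recorded in the excerpt.

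Next I would unwind the conditions defining $\mathcal I$. By \eqref{3.13}, for $i\in\mathcal I$ we have $\sum_{\uu}i(\uu)^{(j)}=2$ for each base-$3$ digit position $j$, and $\sum_{\uu}i(\uu)\uu$ has every component a positive multiple of $q-1=2$. Since $q=3$, each $i(\uu)\in\{0,1,2\}$ so $i(\uu)=i(\uu)^{(0)}$ and there is only one digit position. Thus $\mathcal I$ consists of the functions $i$ supported on one or two points $\uu_0,\vv_0$ with $i(\uu_0)=i(\vv_0)=1$ (if $\uu_0\ne\vv_0$) or $i(\uu_0)=2$ (if $\uu_0=\vv_0$), subject to $\sum i(\uu)\uu=\uu_0+\vv_0$ having all components even and positive, i.e. $\uu_0+\vv_0\ge(2,\dots,2)$ with $\uu_0+\vv_0$ even componentwise, equivalently $\uu_0\equiv\vv_0\pmod 2$ and $\uu_0+\vv_0\ge(2,\dots,2)$. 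The degree constraint $|\uu_0|,|\vv_0|\le d$ together with $\uu_0+\vv_0\ge(2,\dots,2)$ forces $|\uu_0|,|\vv_0|\ge 2n-d$, giving the index set $|\uu|\in[2n-d,d]$. Here $\gamma(1)=1$, $\gamma(2)=2!=2$; and when $\uu_0=\vv_0$ the monomial is $a_{\uu_0}^2$ with a factor $1/\gamma(2)=1/2$, which over $\f_3$ equals $-1$, matching the sign so that the diagonal of $A$ carries $*$ rather than an explicit $1$. So $E(f)=0$ iff $(a_\uu)$ is a zero of the quadratic form $Q=\sum Y_\uu Y_\vv$ over the stated pairs.

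Then I would compute the number of zeros of $Q$ in $\f_3^N$. Order the variables by increasing $|\uu|$ with $\uu$ and $\uu^c=(2,\dots,2)-\uu$ in symmetric positions; the pairing $\{\uu,\uu^c\}$ always satisfies $\uu+\uu^c=(2,\dots,2)$ and $\uu\equiv\uu^c\pmod 2$, so the associated matrix $A$ is upper-triangular with the anti-diagonal pattern shown, and by column operations (available since $3\ne 2$, so $Q$ is an honest quadratic form, not alternating) it is congruent to $\left[\begin{smallmatrix}0&\Delta_{(N+1)/2}\\0&0\end{smallmatrix}\right]$, i.e. $Q$ is equivalent to $Y_1Y_N+Y_2Y_{N-1}+\cdots$, a nondegenerate form only on the $2\lfloor N/2\rfloor$ hyperbolic-pair variables with the middle variable (when $N$ odd) free. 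Either way the rank is even, $2\lfloor N/2\rfloor$, and by \cite[Theorem~6.27]{LN} the number of zeros is exactly $q^{N-1}=3^{N-1}$ (the hyperbolic case over any $\f_q$ gives $q^{N-1}$ when the form has even rank $2r$ with $r\le N/2$ and the Witt index is maximal). Finally, the codewords $f$ with the top coefficients ranging over all of $\f_3^N$ and the remaining $\dim R_3(d,n)-N$ coordinates free each contribute the same count, yielding $N_3(d,n;1)=3^{N-1}\cdot 3^{\dim R_3(d,n)-N}=3^{\dim R_3(d,n)-1}$, with the dimension formula read off from \eqref{dim}.

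I expect the main obstacle to be verifying cleanly that the matrix $A$ is, up to $\mathrm{GL}(N,\f_3)$-congruence, the claimed hyperbolic-type matrix: one must check that every pair $\{\uu,\vv\}$ with $\uu\ne\uu^c$ actually appears (so the anti-diagonal is genuinely filled), that the diagonal entries $Y_\uu^2$ coefficients are accounted for with the right sign, and that the off-anti-diagonal $*$ entries can be cleared by symmetric row/column operations without disturbing the anti-diagonal — this is where ordering by $|\uu|$ (so that clearing happens strictly within or below a block) does the work. The zero-count citation and the bookkeeping with $\dim R_3(d,n)$ are then routine, exactly parallel to Corollaries~\ref{C3.7} and \ref{C3.9}.
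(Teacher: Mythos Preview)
Your overall approach matches the paper's exactly: determine $\mathcal I$ (with $\mathcal I'=\emptyset$), write $E(f)$ as a quadratic form $Q$ in the coefficients $(a_{\uu})$, put the matrix of $Q$ into a standard shape, and read off the zero count from \cite[Theorem~6.27]{LN}. The characterization of $\mathcal I$ and the bookkeeping with $\dim R_3(d,n)$ are fine.

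However, there is a genuine error in your analysis of $Q$. You assert that the target matrix $\left[\begin{smallmatrix}0&\Delta_{(N+1)/2}\\0&0\end{smallmatrix}\right]$ represents a form of even rank $2\lfloor N/2\rfloor$ in which ``the middle variable (when $N$ odd) is free.'' This is false. The involution $\uu\mapsto\uu^c=(2,\dots,2)-\uu$ on $\{\uu\in\{0,1,2\}^n:|\uu|\in[2n-d,d]\}$ has exactly one fixed point, namely $\uu=(1,\dots,1)$, so $N$ is always odd. The pair $\uu_0=\vv_0=(1,\dots,1)$ satisfies all the conditions for $\mathcal I$ (in particular $2\uu_0=(2,\dots,2)$), so $Q$ has a nonzero $Y_{(1,\dots,1)}^2$ term. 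The matrix $\left[\begin{smallmatrix}0&\Delta_{(N+1)/2}\\0&0\end{smallmatrix}\right]$ has a $1$ at position $\bigl((N+1)/2,(N+1)/2\bigr)$, and the associated form is $Y_1Y_N+\cdots+Y_{(N-1)/2}Y_{(N+3)/2}+Y_{(N+1)/2}^2$, which is \emph{nondegenerate of odd rank $N$}.

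Your parenthetical claim that a hyperbolic form of even rank $2r$ in $N$ variables has $q^{N-1}$ zeros is also false: the count is $q^{N-1}+(q-1)q^{N-r-1}$, which is never $q^{N-1}$. The correct justification, and the one the paper uses, is that $N$ is odd and $Q$ is nondegenerate; \cite[Theorem~6.27]{LN} then gives exactly $q^{N-1}$ zeros (independently of the discriminant). So your final formula is right, but the stated reason for the zero count would, if taken literally, produce the wrong number.
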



\end{document}